
\documentclass[11pt]{elsarticle}

\usepackage{amsmath,amsthm,amsfonts,amssymb,graphicx}
\usepackage[hypertex]{hyperref}
\usepackage[all]{xy}
\usepackage{thmtools,thm-restate}
\usepackage[a4paper,textwidth=15cm,textheight=25cm]{geometry}
\usepackage{natbib}
\usepackage{pifont}

\newtheorem{thm}{Theorem}[section]
\newtheorem*{thmno}{Theorem}
\newtheorem{lem}[thm]{Lemma}
\newtheorem{cor}[thm]{Corollary}
\newtheorem{pro}[thm]{Proposition}

\theoremstyle{definition}
\newtheorem{defi}[thm]{Definition}

\newtheorem{exam}[thm]{Example}
\newtheorem{exams}[thm]{Examples}
\theoremstyle{remark}
\newtheorem{rem}[thm]{Remark}

\begin{document}
\begin{frontmatter}

\title{Complexity volumes of splittable groups}
\author[ms]{Mihalis Sykiotis}

\ead{msykiot@math.uoa.gr}
\address{Department of Mathematics, National and Kapodistrian University of Athens, Panepistimioupolis,
GR-157 84, Athens, Greece}

\begin{abstract} Using graph of groups decompositions of finitely generated groups,
we define Euler characteristic type invariants which are non-zero in
many interesting classes of hyperbolic, limit and CSA groups,
including elementarily free groups and one-ended torsion-free
hyperbolic groups whose JSJ decomposition contains a maximal hanging
Fuchsian vertex group.
\end{abstract}

\begin{keyword}
Groups acting on trees \sep Euler characteristics \sep Hyperbolic
groups \sep Limit groups \sep CSA groups \MSC[2010] 20F65 \sep 20E08  \sep 20F67 \sep
20E36
\end{keyword}

\end{frontmatter}

\section{Introduction}
Following \cite{Chi}, a \emph{generalised Euler characteristic} (or
volume) on a class of groups $\mathcal{C}$, closed under subgroups
of finite index, is a function $\chi:\mathcal{C}\rightarrow R$,
where $R$ is a commutative ring, satisfying the following condition:
If $G\in \mathcal{C}$ and $H$ is a subgroup of finite index in $G$,
then $\chi(H)=[G:H]\chi(G)$.

The existence of such an invariant, non-vanishing on a group $G$ has
two immediate, but important, consequences. First, isomorphic
subgroups of finite index of $G$ have the same index. Second, every
monomorphism from G to G with image of finite index is an
automorphism.

Euler characteristics have been studied by Bass, Brown, Chiswell,
Serre, Hattori and Stallings (see \cite{Chi} and the references
contained therein) on classes of groups satisfying certain
homological finiteness properties.

The idea of defining generalised Euler characteristics on groups
using a limiting process and subgroups of finite index goes back to
Wall \cite[Problem E10, p. 385]{ScW}. More precisely, given a real
valued function $f$ on a class of groups such that $f(H)\leq [G:H]
f(G)$, for any subgroup $H$ of finite index in $G$, Wall defined
$$\tilde{f}(G)=\textrm{inf}\bigg\{\frac{f(H)}{[G:H]}\,:\, H \textrm{ subgroup of finite index in } G \bigg\}$$
and remarked that $\tilde{f}$ is a generalised Euler characteristic.
He also gave two examples of functions that satisfy the inequality
above: the minimum number of generators for a group $G$ and the
minimum (over all presentations) of total of lengths of relators.
Examples of invariants constructed using limits and subgroups of
finite index can be found in \cite{Rez}, \cite{Sy}, \cite{Lac},
\cite{BK}, \cite{Ts}.

In this paper, we follow the approach initiated in \cite{Sy} to
define invariants using graph of groups decompositions of groups.
Let $G$ be a group acting on a tree $X$ (without inversions). By
Bass-Serre theory, for which we refer to \cite{DicksDun,Serre}, this
is equivalent to saying that $G$ is the fundamental group of the
corresponding graph of groups $(\mathcal{G},X/G)$. We also say that
the pair $(\mathcal{G},X/G)$ is a splitting of $G$.

A vertex $v$ of $X/G$ is called \emph{degenerate} if there is an
edge $e$ incident to $v$ with $G_{v}=G_{e}$. We denote by
$V_{ndeg}(X/G)$ the set of non-degenerate vertices of $X/G$, and by
$r(X/G)$ the rank of the free group $\pi_{1}(X/G)$. The
\emph{complexity} $C_{X}(G)$ (or simply $C(G)$ when $X$ is clear
from context) of $G$ with respect to the above action is defined to
be the sum $C_{X}(G)=r(X/G)+|V_{ndeg}(X/G)|$, if $G$ contains
hyperbolic elements, and $1$ otherwise.

In the case where $G$ is finitely generated the above sum is finite.
To see this, let $X_{G}$ be the minimal $G$-invariant subtree of
$X$, which is the union of the axes of all hyperbolic elements of
$G$. The fact that $G$ is finitely generated implies that the
quotient graph $X_{G}/G$ is finite.  On the other hand, by
\cite[Prop. 2.2]{Sy}, the subtree $X_{G}$ is a ``core" for the
action of $G$ on $X$ in the sense that
$r(X/G)+|V_{ndeg}(X/G)|=r(X_{G}/G)+|V_{ndeg}(X_{G}/G)|$. It follows
that $C_{X}(G)$ is finite as claimed.

The above definition of complexity differs slightly from that in
\cite{Sy} in what the complexity of an elliptic action (i.e. each
group element is elliptic) is now defined to be $1$ instead of $0$.
It seems that $1$ is more appropriate for our purposes. Moreover, in
the case of an elliptic action of a finitely generated group $G$,
any minimal $G$-invariant subtree consists of a single vertex.

Suppose now that vertex groups are contained in disjoint classes
$\mathcal{C}_{i}$ of groups each of which is endowed with a volume
$\varphi_{i}$. To simplify notation we will denote each
$\varphi_{i}$ with $\varphi$.

Suppose first that the group $G$ acting on $X$ contains hyperbolic
elements. As usual, we denote by $G_{v}$ the stabilizer of the
vertex $v$. In this case, we define the \emph{weighted complexity}
$C^{\varphi}(G)$ of $G$ with respect to the given action on $X$ by
the formula
\[C^{\varphi}(G)=r(X/G)+\sum[1+\varphi(G_{v})],\]
where the sum is over all non-degenerate vertices of $X/G$, provided
that the rank of $X/G$ is finite and the sum converges (this is the
case if $G$ is finitely generated). As before if each element of $G$
is elliptic, then the complexity is defined to be $1$. In the case
where $\varphi$ is the trivial volume (i.e. $\varphi$ is identically
zero) the above formula defines the usual complexity $C_{X}(G)$ of
$G$.

Let $\mathcal{S}$ be a class of small groups (in the sense of
Bestvina and Feighn \cite{BF}) closed under subgroups and let
$\mathcal{V}$ be any class of groups closed under subgroups of
finite index. The \textsl{volume} $V(G)$ of a finitely presented
group (or almost finitely presented) $G$ with respect to
$\mathcal{S}$ and $\mathcal{V}$ is defined to be the upper limit
$$\underset {H\in N_{G}}{\varlimsup}\frac{C_{max}(H)}{
[G:H]}\in [0,\infty],$$ where $N_{G}$ is the set of all normal
subgroups of finite index in $G$ and $C_{max}(H)$ is the maximal
complexity of $H$ over all reduced splittings of $H$ with vertex
groups in $\mathcal{V}$ and edge groups in $\mathcal{S}$.
Bestvina-Feighn's accessibility theorem \cite{BF} ensures that
$V(G)$ is finite (see Proposition \ref{finite vol}).

Using splittings over finite subgroups, we obtain the volume
$V_{fin}$ which is strictly positive on any finitely presented (or
accessible) group with infinitely many ends and provides us with a
basic tool for constructing volumes on one-ended groups.

In Section \ref{Acyl}, we consider acylindrical splittings of
finitely generated, one-ended groups over finitely generated, free
abelian groups of rank at most $n$. We use any ``suitable" volume
$\varphi$ on the vertex groups (see Def. \ref{AcyclVol}) and work
with the corresponding weighted complexity. In this way, we define
volumes $V^{\varphi}_{n}$ on finitely generated, one-ended groups,
using reduced acylindrical splittings as above, of maximal weighted
complexity. In this case, finiteness of the volume follows from
acylindrical accessibility \cite{Sel1,We1} and an analogue of
Grushko's theorem for acylindrical splittings \cite{We2}.

Our main results are the following.
\begin{restatable*}{thm}{firstFinVolOne}\label{FinVolOne}
Suppose that $G$ is a finitely generated one-ended group which
admits an acylindrical splitting $(\mathcal{G},Y)$ over free abelian
groups of bounded rank, say by a positive integer $n$. If there is a
vertex group $G_{v}$ with $\varphi(G_{v})>0$ (where $\varphi$ is as
above), then $V^{\varphi}_{n}(G)>0$.
\end{restatable*}

\begin{restatable*}{thm}{firstImagStab}\label{ImagStab}
Let G be a one-ended finitely generated group such that
$V^{\varphi}_{n}(G)>0$. If $f$ is an endomorphism of $G$ whose image
is of finite index, then the decreasing sequence of images
$f^{k}(G)$, $k\in \mathbb{N}$, is eventually constant. Moreover, if
$G$ is torsion-free and every subgroup of finite index is Hopfian
(e.g. $G$ residually finite or hyperbolic), then $f$ is an
automorphism.
\end{restatable*}

We also prove an analogous result for groups with infinitely many
ends, which generalizes a result of Hirshon \cite{Hi}.
\begin{restatable*}{pro}{firstStabImInf}  \label{StabImInf}
Let $G$ be a finitely generated group with
infinitely many ends. Suppose that $G$ splits as an amalgam or a HNN
extension over a malnormal subgroup and that each subgroup of finite
index of $G$ is Hopfian. Then each endomorphism of $G$ with image of
finite index is an automorphism.
\end{restatable*}

Groups with the property that every endomorphism with image of
finite index is an automorphism are called \textsl{cofinitely
Hopfian} in \cite{BGHM}.

In Sections \ref{SecHyp}, \ref{SecLim} and \ref{SecCSA}, we use the
volume $V_{fin}$ on vertex groups with infinitely many ends, and
show how the above results can be applied in torsion-free,
one-ended, hyperbolic groups, limit groups and, more generally, CSA
groups (a group $G$ is CSA if maximal abelian subgroups are
malnormal). In each of these cases, the existence of ``surface type"
vertex groups in such a splitting implies that its fundamental group
has positive volume.

More precisely, in the case of torsion-free, one-ended hyperbolic
groups, we consider cyclic splittings (i.e. splittings where each
edge group is infinite cyclic) and denote the corresponding volume
by $V_{1}^{hyp}$.
\begin{restatable*}{pro}{firsthyppos} \label{hyppos}
Let $G$ be a torsion-free, one-ended hyperbolic group which admits a
cyclic splitting with a free non-abelian vertex group. Then
$V_{1}^{hyp}(G)>0$.
\end{restatable*}

As a special case we have:
\begin{restatable*}{cor}{firstJSJpos}\label{JSJpos}
Let $G$ be a torsion-free, one-ended hyperbolic group. If  the JSJ
decomposition of $G$ contains a (MHF) subgroup, then
$V_{1}^{hyp}(G)>0$. In particular, if the outer automorphism group
$\textrm{Out}(G)$ of $G$ is not virtually finitely generated free
abelian, then $V_{1}^{hyp}(G)>0$.
\end{restatable*}

It should be noted that there are hyperbolic groups as above with
strictly positive volume and trivial JSJ decomposition. For example,
if $G$ is the fundamental group of a closed orientable surface
$S_{g}$ of genus $g\geq 2$, then $V_{1}^{hyp}(G)\geq 2(g-1)$ (see
Example \ref{surfacetype}).

In the case of one-ended limit groups, we consider cyclic
$2$-acylindrical splittings, which arise naturally from the
characterization of limit groups as finitely generated subgroups of
$\omega$-residually free tower groups. Let $V_{1}^{lim}$ denote the
corresponding volume.

\begin{restatable*}{pro}{firstlimpos}\label{limpos}
Let $H$ be a one-ended limit group of height $h\geq 1$ and
$X_{h}$ an $\omega$-rft space such that the fundamental group of
$X_{h}$ contains a copy of $H$.  If the final block (in the
construction of $X_{h}$) is quadratic, then $V_{1}^{lim}(H)>0$.
\end{restatable*}

In particular, if $G$ is a one-ended elementarily free group, then
$V_{1}^{lim}(G)>0$.

In order to generalize the above results to the case of
torsion-free, one-ended CSA groups, we use the work of Guirardel and
Levitt  in \cite{GL1}, where a method is given for constructing an
acylindrical $G$-tree $T_{c}$ (the tree of cylinders) from any
$G$-tree $T$, for any finitely generated group $G$. If $G$ is a
torsion-free, one-ended CSA group and the $G$-tree $T$ has finitely
generated abelian edge stabilizers, then the edge stabilizers of
$T_{c}$ are non-trivial abelian and $T_{c}$ is $2$-acylindrical.
Moreover, $T$ and $T_{c}$ have the same non-abelian stabilizers
which are divided into two types: rigid and flexible. Non-abelian
flexible vertex stabilizers are fundamental groups of compact
surfaces with boundary. Let $V_{n}^{CSA}$ denote the volume defined
using acylindrical splittings over abelian subgroups of bounded rank
$n$, of one-ended, torsion-free CSA groups.
\begin{restatable*}{pro}{firstCSA}\label{CSA}
Let $G$ be a finitely generated, torsion-free, one-ended CSA group
such that abelian subgroups of $G$ are finitely generated of bounded
rank. Suppose that $G$ admits a splitting over abelian subgroups
such that the associated tree of cylinders $T_{c}$ has a non-abelian
flexible vertex stabilizer $G_{v}$. Then there exists $n$ such that
$V_{n}^{CSA}(G)>0$.
\end{restatable*}

Beside torsion-free hyperbolic groups and limit groups, other
examples of Hopfian, CSA groups are $\Gamma$-limit groups, where
$\Gamma$ is a torsion-free group relatively hyperbolic to a finite
family of finitely generated abelian subgroups \cite{Gro1,Gro2}.

\begin{restatable*}{pro}{firstCSAlimit}\label{CSAlimit}
Let $\Gamma$ be a torsion-free group which is hyperbolic relative to
a finite collection of free abelian subgroups and let $G$ be a
one-ended $\Gamma$-limit group. Suppose that $G$ admits a splitting
over abelian subgroups which has a non-abelian vertex group
isomorphic to the fundamental group of a compact surface with
boundary. Then there exists a positive integer $n=n(G)$ such that
$V_{n}^{CSA}(G)> 0$.
\end{restatable*}

\section{Splittings and Volumes}

A finitely generated group $G$ is \emph{small} if it does not admit
a minimal action on a tree for which there are two hyperbolic
elements whose axes intersect in a compact set. Each finitely
generated group not containing the free group of rank 2 is small. A
splitting of $G$ as fundamental group of a graph of groups is
\emph{reduced} if for each edge $e$ not a loop, the edge group
$G_{e}$ is contained properly in the vertex groups of its endpoints.
This means that for each degenerated vertex the corresponding
isomorphism is induced by an edge which is a loop. A $G$-tree is
\emph{reduced} if the corresponding graph of groups is reduced. In
\cite{BF} the term means something weaker: the vertex group of each
vertex of valence two properly contains the corresponding edge
groups.

Let $\mathcal{S}$ be a class of small groups closed under subgroups
and $\mathcal{V}$ a class of groups closed under subgroups of finite
index. For each finitely presented group $\Gamma$, let
$C_{max}(\Gamma)$ denote the maximal complexity of $\Gamma$ over the
complexities of all reduced splittings of $\Gamma$ over
$\mathcal{S}$ with vertex groups in $\mathcal{V}$. Since the
complexity of a splitting can be computed using the corresponding
minimal invariant subtree, we may suppose that all splittings are
minimal.

The main result in \cite{BF} insures that $C_{max}(\Gamma)<\infty$.
More specifically, Bestvina and Feighn proved that for each finitely
presented group, there is an integer $\gamma(\Gamma)$ such that for
each reduced $\Gamma$-tree $X$ with small edge stabilizers, the
number of vertices in the quotient graph $X/\Gamma$ is bounded above
by $\gamma(\Gamma)$. Since the rank of $X/\Gamma$ is bounded above
by the minimal number of generators of $\Gamma$, it follows that
$C_{max}(\Gamma)<\infty$.

We denote by $N_{G}$ the set of all normal subgroups of finite index
in $G$, partially ordered by inverse inclusion.

\begin{defi}\label{defvol}  Let $G$ be a finitely presented group.
The \textsl{volume} $V(G)$ of $G$ with respect to $\mathcal{S}$ and
$\mathcal{V}$ is the upper limit $\underset {H\in
N_{G}}{\varlimsup}\displaystyle\frac{ C_{max}(H)}{[G:H]}\in
[0,\infty]$, i.e. the supremum of the set $L_{N_{G}}=\Big\{\ell\in
\mathbb{R}: \Big(\frac{C_{max}(K)}{[G:K]}\Big)_{K\in \mathcal{K}}
\rightarrow \ell \;\;\textrm{for some cofinal subset }\;\; K
\textrm{ of } N_{G} \Big\}$.
\end{defi}
It is not difficult to show that $L_{N_{G}}\neq \emptyset$. We will
see in Proposition \ref{finite vol} that $V(G)$ is always finite.
\begin{rem}
If $H$ is a finite index subgroup of $G$, then $H$ is finitely
presented and therefore $C_{max}(H)<\infty$.
\end{rem}

\begin{rem}
We could also use any cofinal subset of the collection $\Lambda_{G}$
of all subgroups of finite index in $G$ (partially ordered by
inverse inclusion) instead of $N_{G}$ (or other limit point of the
net). But, calculations are often simplified by working with normal
subgroups.
\end{rem}

\begin{rem}
In the case where $G$ contains finitely many subgroups of finite
index, then $N_{G}$ has a minimum element, say $H_{0}$, and
$V(G)=\displaystyle\frac{ C_{max}(H_{0})}{[G:H_{0}]}$. In
particular, if $G$ is finite, then
$V(G)=\displaystyle\frac{1}{|G|}$.
\end{rem}
\begin{rem}\label{inflimindex}
If $G$ contains infinitely many subgroups of finite index and
$\mathcal{K}$ is a cofinal subset of $N_{G}$ with
$\Big(\frac{C_{max}(K)}{[G:K]}\Big)_{K\in \mathcal{K}}\rightarrow
\ell$, then there exists a sequence $K_{n}$ in $\mathcal{K}$ such
that $\Big(\frac{C_{max}(K_{n})}{[G:K_{n}]}\Big)_{n\in
\mathbb{N}}\rightarrow \ell$ and $[G:K_{n}]\rightarrow \infty$.
Indeed, we consider the sequence $(H_{n})$, where $H_{n}$ is the
intersection of all subgroups in $G$ of index at most $n$. Since $G$
is finitely generated and contains infinitely many subgroups of
finite index, it follows that $[G:H_{n}]\rightarrow \infty$. The
cofinality of $\mathcal{K}$ implies that we can choose for each $n$
a subgroup $K_{n}\in \mathcal{K}$ such that $K_{n}\subseteq H_{n}$
and thus $[G:K_{n}]\rightarrow \infty$. On the other, this sequence
is a cofinal subset of $\mathcal{K}$ and therefore
$\Big(\frac{C_{max}(K_{n})}{[G:K_{n}]}\Big)_{n\in
\mathbb{N}}\rightarrow \ell$.
\end{rem}

\begin{rem}\label{Weigfinite}
Finally, if the vertex groups are partitioned in classes (closed
under subgroups of finite index) in each of which is already defined
a volume, then we can use in the above definition the weighted
complexity.  In this case, in order to insure that the corresponding
volume $V^{\varphi}(G)$ is finite (with respect to $\mathcal{S}$ and
$\mathcal{V}$), it suffices to impose the following additional
hypothesis:

\begin{enumerate}
  \item[(H)] \label{H} For each subgroup $H$ of finite index in $G$ and each
vertex group $H_{v}$ in any splitting of $H$, with edge groups in
$\mathcal{S}$ and vertex groups in $\mathcal{V}$, which attains
$C_{max}^{\varphi}(H)$, there are constants $A_{1}$, $A_{2}$,
$A_{3}$ and $A_{4}$, independent of $H$, such that $$\sum_{v\in
V(Y)}r(H_{v})\leq A_{1}\cdot r(H)+A_{2} \,\textrm{  and  }\,
\varphi(H_{v})\leq A_{3}\cdot r(H_{v})+A_{4}.$$
\end{enumerate}

We will see in section \ref{Acyl}, that hypothesis (H) is satisfied
in many interesting cases.
\end{rem}
For a group $\Gamma$, we denote by $r(\Gamma)$ the minimum number of
generators of $\Gamma$.

\begin{pro}\label{finite vol} Under the above hypotheses, $V^{\varphi}(G)<\infty$.
In particular $V(G)<\infty$.
\end{pro}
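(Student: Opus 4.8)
The plan is to establish the stronger assertion that there is a constant $D=D(G)$ with $C^{\varphi}_{max}(H)/[G:H]\le D$ for \emph{every} subgroup $H$ of finite index in $G$. By Definition \ref{defvol} every element of $L_{N_{G}}$ is a limit of such ratios, so this yields $V^{\varphi}(G)\le D<\infty$ at once, and the unweighted statement $V(G)<\infty$ is the case $\varphi\equiv 0$. When $G$ has only finitely many subgroups of finite index there is nothing to do, since then $N_{G}$ has a least element $H_{0}$ and $V^{\varphi}(G)=C^{\varphi}_{max}(H_{0})/[G:H_{0}]$; so the case of interest is infinitely many finite-index subgroups (and, by Remark \ref{inflimindex}, one could even restrict attention to $[G:H]$ large, though we shall bound the ratio for all $H$).

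First I would use hypothesis (H) to trade the weighted complexity for ordinary combinatorial data. Fix $H$ and a minimal reduced splitting $(\mathcal{H},Y)$ of $H$ over $\mathcal{S}$ with vertex groups in $\mathcal{V}$ realizing $C^{\varphi}_{max}(H)$; we may assume the action has hyperbolic elements, since otherwise $C^{\varphi}_{max}(H)=1\le[G:H]$. Writing the defining formula as
\[
C^{\varphi}_{max}(H)=r(Y)+|V_{ndeg}(Y)|+\sum_{v\in V_{ndeg}(Y)}\varphi(H_{v}),
\]
applying $\varphi(H_{v})\le A_{3}r(H_{v})+A_{4}$ vertexwise and then $\sum_{v\in V(Y)}r(H_{v})\le A_{1}r(H)+A_{2}$ gives
\[
C^{\varphi}_{max}(H)\ \le\ r(Y)+(1+A_{4})\,|V_{ndeg}(Y)|+A_{3}A_{1}\,r(H)+A_{3}A_{2}.
\]
Hence it suffices to bound $r(Y)$, $|V_{ndeg}(Y)|$ and $r(H)$ by an affine function of $[G:H]$ with coefficients depending only on $G$.

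Two of these bounds are standard. As recalled in the introduction, $r(Y)$ is at most the minimal number of generators of $H$, so $r(Y)\le r(H)$; and the Nielsen--Schreier formula, applied to a free presentation of $G$, gives $r(H)\le 1+[G:H]\,(r(G)-1)=O([G:H])$. For $|V_{ndeg}(Y)|$ I would invoke the Bestvina--Feighn accessibility bound $\gamma(H)$ recalled above (applicable since $Y$ is reduced with small edge groups), together with the claim that $\gamma(H)=O([G:H])$: fixing once and for all a finite presentation $2$-complex $K$ of $G$, the covering $\widetilde{K}\to K$ corresponding to $H$ is a finite presentation complex for $H$ whose number of cells in each dimension is exactly $[G:H]$ times that of $K$, while its $2$-cells have attaching words of the same lengths as those of $K$; since the Bestvina--Feighn bound depends only on such a presentation complex and grows at most linearly in its number of cells (for a fixed bound on the lengths of the attaching maps), one gets $\gamma(H)\le B_{1}[G:H]+B_{2}$ with $B_{1},B_{2}=B_{i}(K)$. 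Substituting the three estimates into the last display yields $C^{\varphi}_{max}(H)\le D_{1}[G:H]+D_{2}$ for constants depending only on $G$, whence $C^{\varphi}_{max}(H)/[G:H]\le D_{1}+D_{2}=:D$ and $V^{\varphi}(G)\le D$.

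The point that requires genuine care — and the main obstacle — is precisely the last claim: that the Bestvina--Feighn accessibility constant scales only linearly with the index. The bare finiteness of $\gamma(\Gamma)$ from \cite{BF} is not enough, because after dividing by $[G:H]$ an uncontrolled (e.g.\ superlinear) dependence would reappear; one has to look into the proof of the accessibility theorem — equivalently, track how their bound depends on a presentation complex and then apply it to the finite cover $\widetilde{K}$ of $K$ — to see that the relevant complexity of $\widetilde{K}$ is exactly $[G:H]$ times that of $K$. All the remaining ingredients are either quoted from the material above (the bound $r(Y)\le r(H)$, hypothesis (H), the reduction via Remark \ref{inflimindex}) or are the classical Nielsen--Schreier estimate, and the passage from a uniform bound on the ratio to finiteness of the $\varlimsup$ is immediate from Definition \ref{defvol}.
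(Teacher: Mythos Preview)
Your proposal is correct and follows essentially the same route as the paper: reduce via hypothesis~(H) to bounding $r(Y)$, $|V(Y)|$ and $r(H)$ linearly in $[G:H]$, handle $r(Y)\le r(H)$ and the Reidemeister--Schreier bound as you do, and for $|V(Y)|$ pass to a finite cover of a fixed presentation $2$-complex for $G$. The paper carries out your ``look into the proof of the accessibility theorem'' step explicitly, writing the Bestvina--Feighn bound as $94\,\delta(H)+233\,\beta_{1}(H)+6\dim H^{1}(H;\mathbb{Z}_{2})-139$ and then bounding $\beta_{1}(H)$ and $\dim H^{1}(H;\mathbb{Z}_{2})$ by $r(H)$ and $\delta(H)$ via the cover $L_{1}\to L$; your parenthetical about attaching-map lengths is not needed, since what actually enters is the cell count $a_{0}+a_{2}$ and $\dim H^{1}(L_{1};\mathbb{Z}_{2})$, both of which scale linearly as you say.
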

\begin{proof}
Let $H$ be a subgroup of finite index in $G$, $(\mathcal{H},Y)$ a
reduced splitting of $H$ over small groups of maximal complexity and
$X$ the corresponding universal tree. It is proved in \cite{BF} that
the number of vertices of $Y$ is bounded above by $94 \delta(H)+233
\beta_{1}(H)+6\dim H^{1}(H;\mathbb{Z}_{2})-139$, where
$\beta_{1}(H)$ is the (torsion-free) rank of the abelianization
$H_{ab}$ of $H$ and $\delta(H)$ is a constant (see
\cite{Dun,DicksDun}) defined as follows. Let $\Gamma$ be an almost
finitely presented group. For each 2-dimensional complex $K$ such
that $H^{1}(K;\mathbb{Z}_{2})=0$, on which $\Gamma$ acts freely with
finite quotient $L$, we define $\delta_{\Gamma}(L)=2 \,\dim
H^{1}(L;\mathbb{Z}_{2})+a_{0}+a_{2}$, where $a_{i}$ is the number of
$i$-simplices of $L$. The minimum of all possible
$\delta_{\Gamma}(L)$, is denoted by $\delta(\Gamma)$.

We first note that $\beta_{1}(H)\leq r(H)\leq [G:H](r(G)-1)+1$.

To bound the number $\dim H^{1}(H;\mathbb{Z}_{2})$, we use the
isomorphisms \[H^{1}(H;\mathbb{Z}_{2})\cong
\textrm{Hom}\big(H_{1}(H),\mathbb{Z}_{2}\big)=\textrm{Hom}(H_{ab},\mathbb{Z}_{2})\cong
\bigoplus_{i=1}^{k}\textrm{Hom}(C_{i},\mathbb{Z}_{2}),\] where
$C_{i}$ are cyclic groups whose direct sum is $H_{ab}$, i.e.
$H_{ab}=C_{1}\oplus \cdots \oplus C_{k}$. Since
$\textrm{Hom}(\mathbb{Z},\mathbb{Z}_{2})=\mathbb{Z}_{2}$ and
$\textrm{Hom}(\mathbb{Z}_{n},\mathbb{Z}_{2})=\mathbb{Z}_{2}[n]=\{g\in
\mathbb{Z}_{2}\,:\,ng=0 \}$, it follows that
$H^{1}(H;\mathbb{Z}_{2})$ is the direct sum of at most $k$ copies of
$\mathbb{Z}_{2}$ and thus $\dim H^{1}(H;\mathbb{Z}_{2})\leq k$. The
well-known proof of the existence of the above decomposition of
$H_{ab}$ (with induction on its rank) as direct sums of cyclic
groups, actually shows that the number of summands is equal to its
rank. Hence $\dim H^{1}(H;\mathbb{Z}_{2})\leq k=r(H_{ab})\leq
r(H)\leq [G:H](r(G)-1)+1$.

Fix a finite 2-dimensional complex $L$ with fundamental group $G$.
Let $a_{i}$ denote the number of $i$-simplices of $L$. The group $G$
acts freely on the universal cover $K$ of $L$ with finite quotient.
Moreover, $H^{1}(K;\mathbb{Z}_{2})=0$. Since $H$ is of finite index
in $G$, it acts freely on $K$ with finite quotient $L_{1}$. If
$b_{i}$ is the number of $i$-simplices of $L_{1}$, then
$b_{i}=[G:H]a_{i}$. Therefore,
\begin{align*}\delta(H)& \leq
\delta_{H}(L_{1})=2 \dim H^{1}(L_{1};\mathbb{Z}_{2})+b_{0}+b_{2}= 2
\dim
\textrm{Hom}(H_{1}(L_{1});\mathbb{Z}_{2})+b_{0}+b_{2}\\
 &= 2 \dim
\textrm{Hom}(H_{ab};\mathbb{Z}_{2})+b_{0}+b_{2}\leq
2[G:H](r(G)-1)+1+[G:H]a_{0}+[G:H]a_{2}.\end{align*}
 From the analysis above, we see that there are constants $A$, $B$ and $C$,
such that $$|V(Y)|\leq A \cdot [G:H](r(G)-1)+B+C\cdot
[G:H](a_{0}+a_{2}).$$
 Now, $$\sum_{v\in V(Y)}(1+\varphi(H_{v})\leq \sum_{v\in V(Y)}(1+A_{3}\cdot r(H_{v})+A_{4})
 \leq A_{3}\big(A_{1}\cdot r(H)+A_{2}\big)+|V(Y)|\cdot (A_{4}+1).$$
Since $r(Y)\leq r(H)$, we conclude that there are constants $A'$,
$B'$ and $C'$, such that
$$C_{max}^{\varphi}(H)\leq r(Y)+\sum_{v\in V(Y)}(1+\varphi(H_{v})\leq A' \cdot [G:H](r(G)-1)+B'+C'\cdot
[G:H](a_{0}+a_{2}).$$ It follows that the net
$\displaystyle\bigg(\frac{C_{max}^{\varphi}(H)}{[G:H]}\bigg)_{H\in
N_{G}}$ is bounded above by $A' \cdot (r(G)-1)+\displaystyle\frac{
B'}{ [G:H]}+C'\cdot (a_{0}+a_{2})$ and hence the volume
$V^{\varphi}(G)$ of $G$ is finite.
\end{proof}

The volume of a finitely presented group (or even of an almost
finitely presented group) with respect to a family of splittings as
above, is a generalized Euler characteristic in the following sense:

\begin{pro}\label{multindex}
Let $H$ be a subgroup of finite index in a finitely presented group
$G$. Then $V^{\varphi}(H)=[G:H]V^{\varphi}(G)$. In particular, if
$V^{\varphi}(G)\neq 0$, then isomorphic subgroups of finite index of
$G$ have the same index and every monomorphism from $G$ to $G$ with
image of finite index is an automorphism.
\end{pro}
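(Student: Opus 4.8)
The plan is to establish the multiplicativity $V^{\varphi}(H)=[G:H]V^{\varphi}(G)$ for a finite-index subgroup $H\le G$, and then derive the two stated consequences formally. First I would observe that the set $N_{H}$ of normal finite-index subgroups of $H$ and the set $N_{G}$ of normal finite-index subgroups of $G$ are ``commensurable'' as directed sets: every $K\in N_{G}$ with $K\le H$ lies in $N_{H}$ (since $K$ is normal in $G$, hence in $H$, and of finite index in $H$), and conversely every $L\in N_{H}$ contains its normal core $L_{G}=\bigcap_{g\in G}gLg^{-1}$, which lies in $N_{G}$ and has index in $G$ bounded in terms of $[H:L]$ and $[G:H]$. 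In particular the family $\{K\in N_{G}: K\le H\}$ is cofinal in $N_{H}$, so the upper limit defining $V^{\varphi}(H)$ can be computed along this subfamily. For any such $K$ we have the index identity $[H:K]=[G:K]/[G:H]$, whence
\[
\frac{C_{max}^{\varphi}(K)}{[H:K]}=[G:H]\cdot\frac{C_{max}^{\varphi}(K)}{[G:K]}.
\]

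Next I would take upper limits on both sides along a cofinal subfamily of $N_{G}$ contained in $\{K\le H\}$. The key point is that a cofinal subset of $N_{G}$ consisting of subgroups of $H$ still computes $V^{\varphi}(G)$: by Remark \ref{inflimindex} (or directly, since intersecting any cofinal net with $H$ changes each term by a bounded factor and the indices still tend to infinity) the upper limit of $C_{max}^{\varphi}(K)/[G:K]$ over $K\in N_{G}$, $K\le H$, equals $V^{\varphi}(G)$. Combining this with the displayed identity and the fact that multiplication by the positive constant $[G:H]$ commutes with $\varlimsup$ gives
\[
V^{\varphi}(H)=\varlimsup_{\substack{K\in N_{G}\\ K\le H}}\frac{C_{max}^{\varphi}(K)}{[H:K]}=[G:H]\cdot\varlimsup_{\substack{K\in N_{G}\\ K\le H}}\frac{C_{max}^{\varphi}(K)}{[G:K]}=[G:H]\,V^{\varphi}(G).
\]
Finiteness of both sides is guaranteed by Proposition \ref{finite vol}, so there is no issue with $\infty=\infty$ cancellation: either both are finite and the equality is genuine, or both are $+\infty$ and it still holds.

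For the consequences, suppose $V^{\varphi}(G)\ne 0$. If $H_{1},H_{2}$ are finite-index subgroups of $G$ with $H_{1}\cong H_{2}$, then $V^{\varphi}(H_{1})=V^{\varphi}(H_{2})$ because $V^{\varphi}$ is an isomorphism invariant; by the multiplicativity just proved this reads $[G:H_{1}]V^{\varphi}(G)=[G:H_{2}]V^{\varphi}(G)$, and dividing by the nonzero $V^{\varphi}(G)$ forces $[G:H_{1}]=[G:H_{2}]$. If $\phi\colon G\to G$ is a monomorphism with $\phi(G)$ of finite index, then $G\cong\phi(G)$, so $V^{\varphi}(G)=V^{\varphi}(\phi(G))=[G:\phi(G)]\,V^{\varphi}(G)$; again cancelling $V^{\varphi}(G)\ne 0$ gives $[G:\phi(G)]=1$, i.e. $\phi$ is onto and hence an automorphism. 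The main obstacle is the first step: one must check carefully that passing between $N_{G}$ and $N_{H}$ does not change the relevant upper limit, i.e. that a cofinal family inside $N_{G}\cap\{K\le H\}$ is simultaneously cofinal in $N_{H}$ and ``sees'' the same $\varlimsup$ as all of $N_{G}$. This is a routine but slightly fiddly argument with directed sets and bounded index ratios, along the lines of Remark \ref{inflimindex}; everything else is formal manipulation of limits and indices.
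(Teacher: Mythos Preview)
Your proposal is correct and follows essentially the same route as the paper: both arguments pull out the factor $[G:H]$ via the index identity $[H:K]=[G:K]/[G:H]$ and then identify the two upper limits by exhibiting $\{K\in N_{G}:K\le H\}$ as a common cofinal subset of $N_{G}$ and $N_{H}$. The paper's proof is simply the one-line version of yours, recording only the cofinality fact (``for each subgroup $K$ of finite index in $G$, there is a normal subgroup $M$ of $G$ of finite index with $M\subseteq K\cap H$'') and leaving the directed-set manipulation implicit.
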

\begin{proof}
\[V^{\varphi}(H)=\underset {K\in
N_{H}}{\varlimsup}\frac{C_{max}(K)}{ [H:K]}= [G:H]\underset {K\in
N_{H}}{\varlimsup}\frac{ C_{max}(K)}{[G:K]}=[G:H]V^{\varphi}(G),\]
where the first equality follows from the definition of
$V^{\varphi}$ and the last one from the fact that for each subgroup
$K$ of finite index in $G$, there is a normal subgroup $M$ of $G$ of
finite index such that $M\subseteq K\cap H$.
\end{proof}

We end this section by adapting the definition of complexity volume
for finitely presented groups with infinitely many ends from
\cite{Sy} to the previous setting (which is essentially the same).
It is the volume that is used mainly to define the weighted
complexity of splittings of one-ended groups. We shall also present
some calculations which lead to explicit formulae for the volume of
finitely presented virtually torsion-free groups. These formulae
show that our approach gives a well-behaved generalised Euler
characteristic.

We consider reduced splittings of a finitely presented group over
finite subgroups (without restrictions on vertex groups) and denote
by $V_{fin}(G)$ the corresponding volume. It is worth noting that
the maximal complexity among such reduced splittings of a finitely
generated, torsion-free group $G$ is equal to the number of factors
in the Grushko decomposition of $G$ (i.e. each factor is freely
indecomposable).

\begin{rem} It is not difficult to see that $V_{fin}(G)$ can be also
defined using splittings over finite subgroups and vertex groups
with at most one end, and that it is a finite number if one assumes
only that $G$ is accessible (this follows from \cite[Lemma
7.6]{ScW}). In particular, by Linnell's accessibility theorem
$V_{fin}(G)$ is finite for each finitely generated virtually torsion
free group $G$. The case of free products, i.e. splttings over the
trivial group, is studied in more detail in \cite{Ts}.
\end{rem}

\begin{exams}
(i) Let $G=G_{1}\ast \cdots \ast G_{n}$ be a free product of
torsion-free, freely indecomposable groups, satisfying the condition
in Remark \ref{inflimindex}. The assumption on the free factors
implies that the splitting of any finite index subgroup $H$ of $G$
inherited from the given one of $G$, is of maximal complexity. By
\cite[Prop. 3.2]{Sy}, $C_{max}(H)-1=[G:H](n-1)$ from which it
follows that $V_{fin}(G)=n-1$. In particular, if $F_{n}$ is the free
group of rank $n$, then $V_{fin}(F_{n})=n-1$.\\
(ii) Let $G$ be a finitely presented group with infinitely many
ends, let $(\mathcal{G},Y)$ be a nontrivial finite graph of groups
decomposition of $G$ with finite edge groups such that each vertex
group has at most one end, and let $X$ be the corresponding
universal tree. Suppose that $G$ contains a normal, torsion-free
subgroup $H$ of finite index, satisfying the condition in Remark
\ref{inflimindex} and that $v_{1}\ldots, v_{m}$ are the vertices of
$Y$ with finite vertex group. The subgroup $H$ acts edge freely on
$X$ and each non-trivial vertex stabilizer is infinite and freely
indecomposable being of finite index in the corresponding
$G$-stabilizer. Therefore $V_{fin}(H)=n-1$, where $n$ is the number
of free factors of $H$ in the free product decomposition associated
to the above action, i.e.
\[\begin{array}{ccl}
n-1& = &
C_{X}(H)-1=r(X/H)+|V_{ndeg}(X/H)|-1\\
  & = & |E_{+}(X/H)|-|V(X/H)|+1+|V_{ndeg}(X/H)|-1\\
 &=& |E_{+}(X/H)|-|V_{deg}(X/H)|=\sum_{e\in E_{+}(Y)}|H\backslash
 G/G_{e}|-\sum_{i=1}^{m}|H\backslash G/G_{v_{i}}|\\
 & \stackrel{H\lhd G}{=} & \sum_{e\in
 E_{+}(Y)}\frac{[G:H]}{|G_{e}|}-\sum_{i=1}^{m}\frac{[G:H]}{|G_{v_{i}}|}\\
\end{array}\]
It follows that $$V_{fin}(G)=\frac{V_{fin}(H)}{[G:H]}=\sum_{e\in
 E_{+}(Y)}\frac{1}{|G_{e}|}-\sum_{i=1}^{m}\frac{1}{|G_{v_{i}}|}$$ (compare with \cite[Cor. 3.4]{Ts}
and \cite[Rem. IV.1.11(vii)]{DicksDun}).

In particular, if $\Gamma=\Gamma_{1}\ast_{A}\Gamma_{2}$, where
$\Gamma_{1}$ and $\Gamma_{2}$ are finite groups, then
$V_{fin}(\Gamma)=\displaystyle\frac{1}{|A|}-\frac{1}{|\Gamma_{1}|}-\frac{1}{|\Gamma_{2}|}$.
Also, if $G=\underbrace{G_{1}\ast \cdots \ast
G_{n}}_{\textrm{torsion-free factors}}\ast \underbrace{G_{n+1}\cdots
\ast G_{n+m}}_{\textrm{finite factors}}$ is a free product of freely
indecomposable groups, satisfying the condition in Remark
\ref{inflimindex}, then
$$V_{fin}(G)=n+m-1-\sum_{i=n+1}^{n+m}\frac{1}{|G_{i}|}.$$
(iii) Let $G=A\ast_{H} B$ or $A\ast_{H}$, where $A$, $B$ are
finitely presented virtually torsion-free groups satisfying the
condition in Remark \ref{inflimindex} and $H$ is a finite group.
Following the proof of \cite[Theorem 7.3]{ScW}, it is easy to show
that $G$ is virtually torsion-free and therefore the above formula
can be used. If $(\mathcal{G}_{A},Y_{A})$ and
$(\mathcal{G}_{B},Y_{B})$ are finite graph of group decompositions
of $A$ and $B$, respectively, then we can construct a graph of
groups decomposition of $G$ from these by attaching one edge $e$,
with associated edge group $G_{e}$ such that $|G_{e}|=|H|$. Now, the
formula in the above example implies that
$V_{fin}(A\ast_{H}B)=V_{fin}(A)+V_{fin}(B)+\frac{1}{|H|}$ and
$V_{fin}(A\ast_{H})=V_{fin}(A)+\frac{1}{|H|}$. Both formulas (up to
a sign) have been obtained recently in \cite{Ts}, in a more general
context, using covering space arguments.
\end{exams}

Given a positive integer $m$, we denote by $\mathcal{S}_{m}$ the
class of finite groups of order at most $m$ and by $V_{fin,m}(G)$
the volume of a group $G$ with respect to $\mathcal{S}_{m}$ (i.e.
using splittings of $G$ over $\mathcal{S}_{m}$). It is immediate
from the definition that $V_{fin}(G)\geq \cdots \geq
V_{fin,m+1}(G)\geq V_{fin,m}(G)$.

For later use we need the following lemma.
\begin{lem}\label{VfinH1}
Let $G$ be a finitely generated group. Then $V_{fin,m}(G)\leq m\cdot
r(G)+1$. If, moreover, there is a bound $C$ on the orders of the
finite subgroups of $G$, then $V_{fin}(G)\leq C\cdot r(G)+1$.
\end{lem}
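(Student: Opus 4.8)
The plan is to establish the uniform inequality $C_{max}(H)\le m\cdot r(H)+1$ for every finitely generated group $H$ (where $m$ denotes the bound on the order of the edge groups), and then to pass to finite index subgroups of $G$ exactly as in the proof of Proposition \ref{finite vol}.

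If $H$ has no reduced splitting over $\mathcal{S}_m$ containing a hyperbolic element, then $C_{max}(H)=1\le m\cdot r(H)+1$ and there is nothing to prove. Otherwise, let $(\mathcal{H},Y)$ be a reduced minimal splitting of $H$ over finite groups of order at most $m$ that realises $C_{max}(H)$ and contains hyperbolic elements, with Bass-Serre tree $X$. From $r(Y)=|E_{+}(Y)|-|V(Y)|+1$ and $|V_{ndeg}(Y)|=|V(Y)|-|V_{deg}(Y)|$ one obtains
\[
C_{max}(H)=r(Y)+|V_{ndeg}(Y)|=|E_{+}(Y)|-|V_{deg}(Y)|+1\le|E_{+}(Y)|+1 ,
\]
so it suffices to bound the number of edge orbits by $|E_{+}(Y)|\le m\cdot r(H)$.

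This edge bound is the heart of the argument, and I expect it to be the only real obstacle. It is a quantitative accessibility statement: a reduced minimal action of a finitely generated group $H$ on a tree with all edge stabilizers of order at most $m$ has at most $m\cdot r(H)$ orbits of edges. Two ingredients combine here: the quotient group $\pi_{1}(Y)$ of $H$ satisfies $r(Y)\le r(H)$, and a Bass-Serre fundamental domain count bounds the number of vertex orbits, the factor $m$ arising because the stabilizer of each edge of $X$ has order at most $m$; equivalently one may invoke Linnell's accessibility theorem in its effective form. When the edge groups are trivial (the case $m=1$) this reduces to Grushko's theorem; the general case uses the accessibility machinery, which also guarantees that $C_{max}(H)$ is finite, so that the defining upper limit of $V_{fin,m}(G)$ is meaningful.

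Granting $C_{max}(H)\le m\cdot r(H)+1$, the lemma follows as in Proposition \ref{finite vol}. For a subgroup $H$ of finite index in $G$ one has $r(H)\le[G:H](r(G)-1)+1$ (the Nielsen-Schreier estimate already used there), whence
\[
\frac{C_{max}(H)}{[G:H]}\le\frac{m\bigl([G:H](r(G)-1)+1\bigr)+1}{[G:H]}=m\,(r(G)-1)+\frac{m+1}{[G:H]}\le m\cdot r(G)+1 ,
\]
and taking the upper limit over $H\in N_{G}$ yields $V_{fin,m}(G)\le m\cdot r(G)+1$. For the second assertion, if every finite subgroup of $G$ (hence of every finite index subgroup $H$ of $G$) has order at most $C$, then in any reduced splitting of $H$ over finite subgroups the edge groups automatically have order at most $C$; applying the bound above with $m$ replaced by $C$ gives $C_{max}(H)\le C\cdot r(H)+1$ for all such splittings, and the same computation yields $V_{fin}(G)\le C\cdot r(G)+1$.
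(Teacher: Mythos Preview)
Your proof is correct and follows essentially the same route as the paper's: reduce to the edge bound $|E_{+}(Y)|\le m\cdot r(H)$, then apply the Nielsen--Schreier inequality and take the upper limit. The paper makes the edge bound precise by citing the Dunwoody/Linnell inequality $\sum_{e\in E_{+}(Y)}\frac{1}{|H_{e}|}\le r(H)$ (from \cite{Dun1}, see also \cite{Li}), which immediately yields $|E_{+}(Y)|\le m\cdot r(H)$ since each $|H_{e}|\le m$; this is exactly the ``effective Linnell'' input you allude to.
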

\begin{proof}
Let $H$ be a subgroup of $G$ of finite index and $(\mathcal{H}, Y)$
a reduced splitting of $H$ over $S_{m}$ of maximal complexity. Then,
by \cite[Theorem 2.3]{Dun1} (see also \cite{Li}), $\sum_{e\in
E_{+}(Y)}\frac{1}{|H_{e}|}\leq r(H)$ and thus $|E_{+}(Y)|\leq m\cdot
r(H)$. It follows that $C_{max}(H)\leq r(Y)+|V(Y)|=|E_{+}(Y)|+1\leq
m\cdot r(H)+1\leq [G:H]m\cdot (r(G)-1)+m+1$. By dividing by $[G:H]$,
we see that
$$\frac{C_{max}(H)}{[G:H]}\leq m\cdot
(r(G)-1)+\frac{m+1}{[G:H]}\leq m\cdot r(G)+1.$$ Since the net
$\Big(\scalebox{1.1}{$\frac{C_{max}(H)}{[G:H]}$}\Big)_{H\in N_{G}}$
is bounded above by $m\cdot r(G)+1$, the first claim follows. The
second claim is proved in the same way.
\end{proof}

\section{The case of one-ended groups-Acylindrical
splittings}\label{Acyl}

Following Sela \cite{Sel1}, an action of a group $G$ on a tree $T$
is said to be $k$-\textsl{acylindrical} if the stabilizer of any
path of length greater than $k+1$ is trivial. A graph of groups
decomposition of $G$ is $k$-\textsl{acylindrical} if the action of
$G$ on the corresponding universal tree is $k$-acylindrical. For
example, canonical JSJ decompositions of hyperbolic groups and
cyclic decompositions of limit groups are acylindrical (in fact,
2-acylindrical).

In this section, we consider reduced $k$-acylindrical splittings of
one-ended finitely generated groups over finitely generated free
abelian subgroups of rank at most $n$, and use weighted complexity.

\begin{defi}\label{AcyclVol} Let $\mathcal{C}$ be a class of groups (closed under subgroups
of finite index) and let $\varphi_{0}$ be a generalized Euler
characteristic on $\mathcal{C}$ satisfying $\varphi_{0}(\Gamma)\leq
A_{3}\cdot r(\Gamma)+A_{4}$ for some constants $A_{3}$ and $A_{4}$.
Let $G$ be a finitely generated one-ended group and
$(\mathcal{G},Y)$ a reduced $k$-acylindrical splitting of $G$ such
that each edge group $G_{e}$ is free abelian with $r(G_{e})\leq n$,
for a given positive integer $n$.  If $G_{v}$ is a vertex group of
$(\mathcal{G},Y)$, then we define
$\varphi(G_{v})=\varphi_{0}(G_{v})$ if $G_{v}$ is in $\mathcal{C}$
and $\varphi(G_{v})=0$, otherwise.

Let $V^{\varphi}_{n}(G)= \underset {H\in
N_{G}}{\varlimsup}\displaystyle\frac{C_{max}^{\varphi}(H)}{
[G:H]}\in [0,\infty]$ be the associated volume.
\end{defi}

\begin{rem} The inequality in the above definition is satisfied by
$V_{fin}$ when restricted to finitely generated groups with a
uniform bound on the order of finite subgroups (see Lemma
\ref{VfinH1}) as well as by the deficiency and rank volumes defined
in \cite{Rez} (with $A_{3}=1$ and $A_{4}=-1$).
\end{rem}

Suppose for a moment that $G$ is finitely presented. Then, in order
to have that $V_{n}^{\varphi}(G)<\infty$, it suffices, by
Proposition \ref{finite vol}, to show that the sum of ranks of
non-degenerate vertex groups in any $k$-acylindrical splitting is
bounded above by a linear expression of the rank of the fundamental
group. For this we need the following Theorem from \cite{We2}.

\begin{thm}[\cite{We2}, Theorem 0.1]\label{ThmGru}
Let $(\mathcal{G},Y)$  be a $k$-acylindrical, minimal and without
trivial edge groups splitting of a finitely generated group $G$.
Then
$$ r(G)\geq \frac{1}{2k+1}\Bigg( \sum_{v\in V(Y)}r(G_{v})-\sum_{e\in E_{+}(Y)}r(G_{e})+
2|E_{+}(Y)|+r(Y)+1+3k-\left \lfloor{k/2}\right \rfloor  \Bigg),$$
where $\left \lfloor{k/2}\right \rfloor$ is the largest integer less
than or equal to $k/2$.
\end{thm}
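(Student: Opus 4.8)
The plan is to prove this Grushko-type inequality by the \emph{Nielsen method for groups acting on trees} --- equivalently, by Stallings-style folding for graphs of groups, in the spirit of Dunwoody, Bestvina--Feighn and Weidmann. The object to track is an \emph{$\mathcal{A}$-graph}: a graph of groups $(\mathcal{B},B)$ together with a morphism to $(\mathcal{G},Y)$ (a graph map $B\to Y$, inclusions of vertex groups $B_u\hookrightarrow G_{\tau(u)}$, and the usual edge data), whose fundamental group comes with a canonical homomorphism to $G$. Since $G$ is finitely generated and acts minimally on the Bass--Serre tree $X$, the quotient graph $Y$ is finite, so the right-hand side makes sense; fix a generating set $g_1,\dots,g_n$ of $G$ with $n=r(G)$.

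First I would build a starting $\mathcal{A}$-graph $(\mathcal{B}_0,B_0)$: a bouquet of $n$ loops at a single vertex mapping to a base vertex $v_0$ of $Y$, each loop subdivided so as to record a lift of the corresponding $g_i$ to an edge-path in $X$, and with all vertex groups trivial. Then $\pi_1(\mathcal{B}_0,B_0)=F_n$ and the canonical map $F_n\to G$ is onto. Next I would repeatedly apply elementary folds --- identifications of edges over a common edge of $Y$ whose endpoint data agree up to an element of a vertex group --- together with the auxiliary moves (subdivision, conjugation, absorbing an edge group into a vertex group). Each move keeps the canonical map onto $G$ and does not increase the complexity $c(\mathcal{B})=r(B)+\sum_{u\in V(B)}r(B_u)$, which for $\mathcal{B}_0$ equals $n$. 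Using the finiteness of $Y$ and a complexity refined by a secondary counter (e.g.\ total edge-length of lifts), one argues that the process terminates at a \emph{folded} $\mathcal{A}$-graph $(\mathcal{B},B)$; because its fundamental group is all of $G$ and it is folded, $B\to Y$ is onto and for each $v\in V(Y)$ the subgroups $B_u$ with $\tau(u)=v$, together with the relevant path elements, generate $G_v$. Thus $(\mathcal{B},B)$ \emph{dominates} $(\mathcal{G},Y)$.

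The heart of the proof is then to use $k$-acylindricity to bound how much larger $B$ can be than $Y$. The key estimate is that a maximal ``segment'' of $B$ --- a path that folds down onto a single reduced edge-path of $Y$ --- has length bounded in terms of $2k+1$, since a longer one would exhibit an edge-path of $X$ of length $>k+1$ with nontrivial stabiliser (this is also where the hypothesis of no trivial edge groups enters). Combining this with domination yields linear bounds $|E_+(B)|\le (2k+1)|E_+(Y)| + (\text{additive }k\text{-terms})$, and analogous control on $|V(B)|$ and on $\sum_u r(B_u)$ relative to $\sum_v r(G_v)-\sum_e r(G_e)$. Substituting these into $c(\mathcal{B})\le n$, expanding $r(B)=|E_+(B)|-|V(B)|+1$ and $r(Y)=|E_+(Y)|-|V(Y)|+1$, and rearranging, produces
\[
r(G)\ \ge\ \frac{1}{2k+1}\Bigl(\sum_{v\in V(Y)}r(G_v)-\sum_{e\in E_+(Y)}r(G_e)+2|E_+(Y)|+r(Y)+1+3k-\lfloor k/2\rfloor\Bigr).
\]

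The main obstacle is precisely this quantitative acylindricity step: defining the right notion of segment, proving the sharp length bound $2k+1$, and squeezing out the exact additive constant $3k-\lfloor k/2\rfloor$ from the worst-case folding patterns that $k$-acylindricity still permits. Setting up a complexity function that is genuinely non-increasing under \emph{all} the folding and auxiliary moves --- so that both termination and the final count go through --- is the other delicate ingredient, and must be in place before the counting argument can start. (One could instead attempt an induction on $|E_+(Y)|$, peeling off one edge at a time and invoking Grushko-type inequalities for acylindrical amalgams and HNN extensions, but keeping track of the constants uniformly is much less transparent than the folding approach.)
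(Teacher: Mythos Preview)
The paper does not prove this theorem at all: it is quoted verbatim as Theorem~0.1 of Weidmann~\cite{We2} and used as a black box in the finiteness argument that follows. So there is no ``paper's own proof'' to compare against.

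That said, your outline is a fair high-level sketch of Weidmann's actual argument in~\cite{We2}, which indeed proceeds via the Nielsen method for groups acting on trees (equivalently, folding of $\mathcal{A}$-graphs as in~\cite{We1}). You have correctly identified the two genuinely hard points: (i) setting up a complexity that is non-increasing under all moves and guarantees termination, and (ii) the sharp length bound on segments coming from $k$-acylindricity together with the precise bookkeeping that produces the constant $3k-\lfloor k/2\rfloor$. What you have written is a plan rather than a proof --- the actual combinatorics in~\cite{We2} that yield the exact constants are substantial and not reproduced by the schematic inequalities you list --- but as a plan it points in the right direction. For the purposes of the present paper, however, a citation to~\cite{We2} is all that is required.
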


Moreover, using acylindrical splittings, we can obtain finiteness of
$V_{n}^{\varphi}(G)$ for a finitely generated group $G$, bypassing
the proof of Proposition \ref{finite vol}, because of the following
accessibility result.
\begin{thm}[\cite{Sel1,We1}]\label{AcylAccess}
Let $G$ be a non-cyclic, freely indecomposable finitely generated
group which admits a minimal and $k$-acylindrical action on a tree
$T$. Then the quotient graph $T/G$ has at most $2k(r(G)-1)+1$
vertices.
\end{thm}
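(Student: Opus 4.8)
The plan is to obtain the explicit linear bound from the Nielsen method for group actions on trees developed by Weidmann \cite{We1} (the same circle of ideas underlies Sela's original acylindrical accessibility theorem \cite{Sel1}, and it is of a piece with the Grushko-type estimate recorded in Theorem \ref{ThmGru}).

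First I would fix a generating tuple $(g_{1},\dots,g_{n})$ of $G$ of minimal size $n=r(G)$; since $G$ is non-cyclic, $n\ge 2$. Recording this tuple alongside the action on $T$ produces a finite graph of groups equipped with the extra data of the $g_{i}$ (an ``$A$-graph''), representing the action of $G$ on $T$ and mapping to $Y=T/G$. One then repeatedly applies the folding moves (the Nielsen transformations adapted to trees) and passes to a \emph{reduced} $A$-graph, one to which no fold applies. The structural output of the Nielsen machinery is that a reduced $A$-graph of a non-cyclic, freely indecomposable, finitely generated group $G$ has underlying graph of size bounded linearly in $n$, and that along each of its edges the two edge-to-vertex embeddings are reduced in a way that forbids collapsing the edge: collapsing would either destroy surjectivity onto $G$ or exhibit a free splitting of $G$ relative to the edge groups, contradicting freely-indecomposability.

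The remaining point --- and the place where acylindricity is essential --- is that a reduced $A$-graph only bounds the number of branch vertices, whereas a single edge of it can unfold inside $T$ into a long segment contributing many $G$-orbits of vertices. Here one uses that such a folding-stable segment must carry a common non-trivial stabilizer along its length (otherwise the stabilizers drop and the segment yields a free splitting of $G$, contrary to hypothesis), and $k$-acylindricity then caps the length of such a segment at $k+1$. Feeding this into the bookkeeping --- each essential edge of the reduced $A$-graph contributes at most $2k$ orbits of vertices of $T$, together with a single base vertex, and the non-cyclic hypothesis limits the number of essential edges to $r(G)-1$ once the tree shape is accounted for --- yields $|V(T/G)|\le 2k(r(G)-1)+1$.

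The step I expect to be the main obstacle is exactly the reduced-$A$-graph structure theory: fixing precise definitions of folds and of a reduced $A$-graph, proving that reducedness together with freely-indecomposability forces the controlled branch structure, and --- most delicately --- showing that a violation of the bound can only arise from a long segment of $T$ with non-trivial stabilizer, which is what lets $k$-acylindricity close the argument. Since this is precisely Weidmann's theorem in \cite{We1}, for the purposes of this paper I would cite it directly; the statement is recorded here only to isolate the explicit linear estimate used in Section \ref{Acyl} to establish finiteness of $V^{\varphi}_{n}$.
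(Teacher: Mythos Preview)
This theorem is not proved in the paper at all; it is simply quoted from \cite{Sel1,We1} as a known result, with no argument given. Your proposal correctly recognises this in its final sentence, and the sketch you give of Weidmann's Nielsen-method approach is reasonable background, but for the purposes of matching the paper there is nothing to compare: just cite the result.
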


Now, combining acylindrical accessibility and Theorem \ref{ThmGru},
we are in position to prove the finiteness of $V^{\varphi}_{n}(G)$.

\begin{pro}
Let $G$ be a one-ended finitely generated group. Then
$V^{\varphi}_{n}(G)<\infty$.
\end{pro}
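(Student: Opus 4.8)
The plan is to bound the ratio $C_{max}^{\varphi}(H)/[G:H]$ uniformly over all subgroups $H$ of finite index in $G$; as a net that is bounded above has finite upper limit, this gives $V^{\varphi}_{n}(G)<\infty$ (and the same estimate shows that each $C_{max}^{\varphi}(H)$ is already finite). So fix $H$ of finite index in $G$ and let $(\mathcal{H},Y)$ be an arbitrary reduced $k$-acylindrical splitting of $H$ over free abelian groups of rank at most $n$; since the complexity depends only on the minimal invariant subtree we may take it to be minimal. The group $H$ is one-ended, being of finite index in the one-ended group $G$, hence non-cyclic and freely indecomposable; in particular $(\mathcal{H},Y)$ has no trivial edge group, for such an edge would display $H$ as a nontrivial free product or as an HNN extension over the trivial group --- neither of which is one-ended. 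Thus Theorems \ref{AcylAccess} and \ref{ThmGru} both apply to $(\mathcal{H},Y)$.

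Next I would derive the two estimates of hypothesis (H) of Remark \ref{Weigfinite}. By acylindrical accessibility (Theorem \ref{AcylAccess}), $|V(Y)|\le 2k(r(H)-1)+1$; together with $r(Y)\le r(H)$ (the rank of $\pi_{1}(Y)$, a quotient of $H$) and $|E_{+}(Y)|=r(Y)+|V(Y)|-1$ this yields $|E_{+}(Y)|\le (2k+1)r(H)$. Inserting $r(H_{e})\le n$ into Weidmann's inequality (Theorem \ref{ThmGru}) and solving for the vertex term gives
\[
\sum_{v\in V(Y)}r(H_{v})\;\le\;(2k+1)\,r(H)+(n-2)\,|E_{+}(Y)|-r(Y)-\Bigl(1+3k-\bigl\lfloor k/2\bigr\rfloor\Bigr).
\]
When $n\le 2$ the term $(n-2)|E_{+}(Y)|$ is nonpositive and may be dropped; when $n\ge 3$ it is at most $(n-2)(2k+1)r(H)$ by the previous line. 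Discarding the remaining nonpositive terms, in all cases
\[
\sum_{v\in V(Y)}r(H_{v})\;\le\;A_{1}\,r(H),\qquad A_{1}:=\max\{1,\,n-1\}\,(2k+1),
\]
which is the first inequality of (H). For the second, after enlarging $A_{3},A_{4}$ if necessary we may assume both are nonnegative (this only weakens the bound $\varphi_{0}(\Gamma)\le A_{3}r(\Gamma)+A_{4}$, so it remains valid), and then $\varphi(H_{v})\le A_{3}r(H_{v})+A_{4}$ holds for every vertex group: by Definition \ref{AcyclVol} if $H_{v}\in\mathcal{C}$, and because $\varphi(H_{v})=0$ otherwise.

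Finally, reproducing the argument of Proposition \ref{finite vol} with acylindrical accessibility in place of the Bestvina--Feighn bound (which is why finite presentability of $G$ is not needed here), I would write
\[
C_{max}^{\varphi}(H)=r(Y)+\sum_{v\in V_{ndeg}(Y)}\bigl(1+\varphi(H_{v})\bigr)\;\le\;r(Y)+\sum_{v\in V(Y)}\bigl(1+A_{3}r(H_{v})+A_{4}\bigr)
\]
and then substitute $r(Y)\le r(H)$, $|V(Y)|\le 2k(r(H)-1)+1$, and $\sum_{v}r(H_{v})\le A_{1}r(H)$ to obtain $C_{max}^{\varphi}(H)\le B_{1}r(H)+B_{2}$ for constants $B_{1},B_{2}$ depending only on $k,n,A_{3},A_{4}$; as this holds for every admissible splitting, it holds for $C_{max}^{\varphi}(H)$ itself. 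The Nielsen--Schreier bound $r(H)\le [G:H](r(G)-1)+1$ then gives, after dividing by $[G:H]$, the estimate $C_{max}^{\varphi}(H)/[G:H]\le B_{1}(r(G)-1)+B_{1}+B_{2}$, which does not depend on $H$; hence $V^{\varphi}_{n}(G)<\infty$.

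The one step that needs real care is the middle paragraph: after rearranging, Weidmann's inequality carries the edge contributions $+\sum_{e}r(H_{e})$ and $-2|E_{+}(Y)|$, and the whole argument turns on using the edge-rank bound $r(H_{e})\le n$ together with acylindrical accessibility to absorb both the total edge rank and the number of edges into a linear function of $r(H)$. Once that linear control is in hand, everything else is the bookkeeping already performed in Proposition \ref{finite vol}.
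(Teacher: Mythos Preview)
Your proof is correct and follows essentially the same route as the paper: bound $\sum_{v}r(H_{v})$ linearly in $r(H)$ via Weidmann's rank formula (Theorem~\ref{ThmGru}) combined with acylindrical accessibility (Theorem~\ref{AcylAccess}), feed this into the weighted-complexity estimate, and finish with the Schreier index bound. If anything you are slightly more careful than the paper in justifying that $(\mathcal{H},Y)$ has no trivial edge groups, in splitting off the case $n\le 2$, and in arranging $A_{3},A_{4}\ge 0$ so that $\varphi(H_{v})\le A_{3}r(H_{v})+A_{4}$ also covers the case $H_{v}\notin\mathcal{C}$.
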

\begin{proof}
Let $H$ be a finite index subgroup of $G$ and $(\mathcal{H},Y)$ a
reduced $k$-acylindrical splitting of $H$ over free abelian groups
of rank at most $n$. By \cite[Prop. 2.2]{Sy} we may assume that
$(\mathcal{H},Y)$ is minimal.

By Theorem \ref{ThmGru}, we have
 $$\begin{array}{lll}
 \sum_{v\in V(Y)}r(H_{v}) &\leq & (2k+1)r(H)+\sum_{e\in
                              E_{+}(Y)}r(H_{e})-2|E_{+}(Y)|-r(Y)\\
 &\leq  & (2k+1)r(H)+n|E_{+}(Y)|-2|E_{+}(Y)|-r(Y)\\
 &\leq  & (2k+1)r(H)+(n-2)(r(Y)+|V(Y)|-1)+r(Y)\\
 &\leq & (2k+1)r(H)+(n-1)r(Y)+(n-2)(|V(Y)|-1)\\
 &\leq & (2k+n)r(H)+(n-2)\big(2k(r(H)-1)\big)
\end{array}$$
where the last inequality follows from Theorem \ref{AcylAccess} (and
the well-known fact that the rank of $H$ bounds the rank of $Y$). It
follows that there are constants $A_{1}$ and $A_{2}$, depending only
on $k$ and $n$, such that $\sum_{v\in V(Y)}r(H_{v})\leq A_{1}\cdot
r(H)+A_{2}$. Hence
$$\begin{array}{lll}
C_{max}^{\varphi}(H)&\leq & r(Y)+\sum_{v\in
V(Y)}\big(1+\varphi(H_{v})\big)\\
&\leq & r(Y)+\sum_{v\in V(Y)}\big(A_{3}\cdot
r(H_{v})+A_{4}+1\big)\\
&\leq & r(Y)+A_{3}\sum_{v\in V(Y)}r(H_{v})+(A_{4}+1)|V(Y)|\\
&\leq & r(H)+A_{3}\big(A_{1}\cdot r(H)+A_{2}\big)+(A_{4}+1)(2k(r(H)-1)+1)\\
\end{array}$$

The above inequality implies that there are constants $A$ and $B$,
depending only on $k$ and $n$, such that
$$C_{max}^{\varphi}(H)\leq A\cdot r(H)+B\leq A\cdot \big([G:H](r(G)-1)+1\big)+B
$$
and therefore
$$\frac{C_{max}^{\varphi}(H)}{[G:H]}
 \leq A\cdot \big(r(G)-1\big)+\frac{A+B}{[G:H]}
$$
which proves the proposition.
\end{proof}

\begin{rem}\label{VacylH1}
In particular, the proof shows that there are constants $C$ and $D$
depending only on $k,n$ such that $V^{\varphi}_{n}(G)\leq C\cdot
r(G)+D$.
\end{rem}

Our first main result is the following.

\firstFinVolOne
\begin{proof}
Since $\varphi(G_{v})>0$ and each edge group is isomorphic to a
proper subgroup of finite index, it follows that $v$ is a
non-degenerate vertex. Let $X$ be the corresponding tree and $v_{0}$
a lift of $v$. Then
\[V(X)=\coprod_{u\in V(Y)}\; G/G_{u}\quad \textrm{and}\quad E_{+}(X)=\coprod_{e\in E_{+}(Y)}\; G/G_{e}.\]
By replacing $X$ with the minimal $G$-invariant
subtree $X_{G}$ of $X$, we may assume that the action is minimal.
Note that $v\in X_{G}/G$ by \cite[Prop. 2.2]{Sy}. We also note that
the collapse of an edge does not increase the ``degree" of
acylindricity. Thus, by collapsing any edge which is not a loop
having degenerate endpoint, we can suppose further that
$(\mathcal{G},Y)$ is reduced.

Let $H$ be a normal subgroup of $G$ of finite index. If the action
of $G$ on $X$ is $k$-acylindrical, then the action of $H$ on $X$ is
$k$-acylindrical as well and each edge $H$-stabilizer is free
abelian of rank at most $n$. Since $H$ is of finite index in $G$, it
follows that $H_{u}$ is of finite index in $G_{u}$ as well, for each
vertex $u$ of $X$. We note that the vertices and edges of the
quotient graph $X/H$ have the following form:
\[V(X/H)=\coprod_{u\in V(Y)}\; H\backslash G/G_{u}\quad \textrm{and}\quad E_{+}(X/H)=\coprod_{e\in E_{+}(Y)}\;
H\backslash G/G_{e}.\]

Let $D_{H}$ be the set of $H$-degenerate vertices of $X$. Since $H$
is normal in $G$, the set $D_{H}$ is invariant under the action of
$G$. If $D_{H}/G=\{v_{1},\dots,v_{m}\}$, then all vertices of type
$H\backslash G/G_{v_{i}}$ for $i=1,\dots,m$ of $X/H$ are degenerate.
For each $i\in\{1,\dots,m\}$, we choose an edge $\psi(v_{i})\in
E_{+}(Y)$ such that $H_{i(\psi(v_{i}))}=H_{v_{i}}$ or
$H_{t(\psi(v_{i}))}=H_{v_{i}}$, and $H_{v_{i}}=G_{v_{i}}\cap
H=G_{\psi(v_{i})}\cap H=H_{\psi(v_{i})}$. In this way we get a map
$\psi:\{v_{1},\dots,v_{m}\}\rightarrow E_{+}(Y)$ such that the
inverse image of $\psi(v_{i})$ under $\psi$ consists of at most two
vertices. Now, if $\textrm{Im}(\psi)=\{e_{1},\dots,e_{k}\}$, then
$\{v_{1},\dots,v_{m}\}=\psi^{-1}(e_{1})\sqcup \cdots
\sqcup\psi^{-1}(e_{k})$.

It is easy to check that the collapse of a degenerate edge not a
loop (i.e. the operation which makes a graph of groups reduced) can
go up complexity by one. Therefore, if we denote the sum $\sum
\varphi(H_{u})$ over all non-degenerate vertices of $X/H$ by $\Phi$,
then
 \begin{align*}
C^{\varphi}_{max}(H) & \geq  C^{\varphi}(\textrm{of a reduced obtained from}\,\, X/H )\\
           & \geq  C^{\varphi}(X/H) =  r(X/H)+|V_{ndeg}(X/H)|+\Phi \\
           & =  |E_{+}(X/H)|-|V(X/H)|+1+|V_{ndeg}(X/H)|+\Phi \\
           & =   \sum_{e\in E_{+}(Y)} |H\backslash G/G_{e}|-
           \sum_{i=1}^{m} |H\backslash G/G_{v_{i}}|+1+\Phi\\
            & =   \sum_{e\in E_{+}(Y)}\frac{[G:H]}{[G_{e}:G_{e}\cap H]}-
           \sum_{i=1}^{m}\frac{[G:H]}{[G_{v_{i}}:G_{v_{i}}\cap H]}
           +1+\Phi, \end{align*}

 where the last equality follows from the normality of $H$ in
 $G$. Hence \small
\begin{align*} \frac{C^{\varphi}_{max}(H)}{[G:H]} & \geq \sum_{e\in
                                  E_{+}(Y)}\frac{1}{[G_{e}:G_{e}\cap H]}-
                                  \sum_{i=1}^{m}\frac{1}{[G_{v_{i}}:G_{v_{i}}\cap H]}
                                  +\frac{1+\Phi}{[G:H]}\\
 & =   \sum_{e\notin \textrm{Im}(\psi)}\frac{1}{[G_{e}:G_{e}\cap H]}+\sum_{e\in
                  \textrm{Im}(\psi)}\frac{1}{[G_{e}:G_{e}\cap H]} -
                  \sum_{i=1}^{m}\frac{1}{[G_{v_{i}}:G_{v_{i}}\cap H]}+\frac{1+\Phi}{[G:H]}\\
 & =    \sum_{e\notin
                  \textrm{Im}(\psi)}\frac{1}{[G_{e}:G_{e}\cap H]}+\sum_{e\in
                  \textrm{Im}(\psi)}\bigg\{\frac{1}{[G_{e}:G_{e}\cap H]}-
                  \sum_{\psi^{-1}(e)}\frac{1}{[G_{v_{i}}:G_{v_{i}}\cap H]}\bigg\}+\frac{1+\Phi}{[G:H]} \\
 & =    \sum_{e\notin \textrm{Im}(\psi)}\frac{1}{[G_{e}:G_{e}\cap H]}+
                  \sum_{e\in \textrm{Im}(\psi)}\bigg\{\frac{1}{[G_{e}:G_{e}\cap H]}-
                  \sum_{\psi^{-1}(e)}\frac{1}{[G_{v_{i}}:G_{e}\cap
                  H]}\bigg\}+\frac{1+\Phi}{[G:H]}.
\end{align*}
\normalsize

 Let us denote the term  $\scalebox{0.9}{$\displaystyle\frac{1}{[G_{e}:G_{e}\cap H]}-
\sum_{\psi^{-1}(e)}\frac{1}{[G_{v_{i}}:G_{e}\cap H]}$}$   by
$A_{e}$. We claim that $A_{e}\geq 0$. Indeed, if $e$ is a loop, then
$\psi^{-1}(e)$ consists of a single vertex, say $v_{i}$, and
      $A_{e}=\scalebox{1.2}{$\frac{1}{[G_{e}:G_{e}\cap
      H]}-\frac{1}{[G_{v_{i}}:G_{e}\cap H]}$}\geq 0$.
 Moreover, $A_{e}=0$
if and only if the vertex of loop $e$ is $G$-degenerate. If
$\psi^{-1}(e)=\{v_{1},v_{2}\}$, then $e$ is not a loop, in
particular $G_{e}$ is properly contained in each of $G_{v_{1}}$ and
$G_{v_{2}}$ (being $Y$ reduced), and thus $2[G_{e}:G_{e}\cap H]\leq
[G_{v_{i}}:G_{e}\cap H]$, $i=1,2$. It follows that
  $\scalebox{0.9}{$ \displaystyle
  A_{e}=\frac{1}{[G_{e}:G_{e}\cap H]}-\frac{1}{[G_{v_{1}}:G_{e}\cap
  H]}-\frac{1}{[G_{v_{2}}:G_{e}\cap H]} \geq \frac{1}{[G_{e}:G_{e}\cap
  H]}\Big(1-\frac{1}{2}-\frac{1}{2}\Big)$}\geq 0$.
 Again, $A_{e}=0$ if
and only if $G_{e}$ is of index two in each of $G_{v_{1}}$ and
$G_{v_{2}}$.

Since the index $[G_{v}:G_{v}\cap H]$ is finite, it follows that
$\varphi(G_{v}\cap H)=[G_{v}:G_{v}\cap H]\varphi(G_{v})>0$. This (as
before) implies that all vertices $gv_{0}$, $g\in G$, are
non-degenerate under the action of $H$ and therefore the orbit of
$v_{0}$ gives $|H\backslash G/G_{v}|=[G:H]/[G_{v}:G_{v}\cap H]$
non-degenerate vertex groups (each isomorphic to $G_{v}\cap H$) in
the quotient graph $X/H$. We conclude that
 \begin{align*}\frac{C_{max}^{\varphi}(H)}{[G:H]}
 &\geq \sum_{e\in E_{+}(Y)-\textrm{Im}(\psi)}\frac{1}{[G_{e}:G_{e}\cap H]}+
                  \sum_{e\in \textrm{Im}(\psi)}A_{e}+\frac{1}{[G:H]}+\frac{|H\backslash
                  G/G_{v}|}{[G:H]}\varphi(G_{v}\cap H)\\
 &\geq \frac{1}{[G_{v}:G_{v}\cap
H]}\varphi(G_{v}\cap H)=\frac{1}{[G_{v}:G_{v}\cap
H]}[G_{v}:G_{v}\cap H]\varphi(G_{v})=\varphi(G_{v}).
 \end{align*}
Finally, $V^{\varphi}_{n}(G)\geq \varphi(G_{v})>0$.
\end{proof}

\begin{rem}\label{fpsmall}
Let $\mathcal{S}$ be a class (closed under subgroups) consisting of
small groups $G$ such that the set of indices of all finite index
subgroups of $G$ is bounded above. It is not difficult to see that
there are infinite groups satisfying this property by considering,
for example, direct products $A\times F$, where $A$ is a (finitely
generated) periodic simple group (such as a Tarski monster) and $F$
is a finite group. Let us denote by $V_{bi}$ the volume defined
using splittings of finitely presented groups over $\mathcal{S}$.
The same arguments as above (with $\varphi=0$) can be used to show
the following:
\begin{thmno} Let $G$ be a finitely presented group which admits a nontrivial
splitting over $\mathcal{S}$ such that at least one vertex group has
no subgroup of finite index contained in $\mathcal{S}$. Then
$V_{bi}(G)>0$.\end{thmno}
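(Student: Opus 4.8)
The plan is to run the proof of Theorem~\ref{FinVolOne} with the trivial weight $\varphi\equiv 0$, letting the uniform bound on indices of finite-index subgroups of the edge groups play the role that the positivity of $\varphi(G_v)$ played there. First I would normalise the splitting. Since $\mathcal{S}$ is closed under subgroups and $G_v$ has no finite-index subgroup in $\mathcal{S}$ (in particular $G_v\notin\mathcal{S}$), the vertex $v$ cannot be degenerate, because a degenerate vertex satisfies $G_v=G_e\in\mathcal{S}$ for some incident edge $e$. Hence $v$ is retained when we pass to the minimal $G$-invariant subtree ($v\in X_G/G$ by \cite[Prop.~2.2]{Sy}), and it is untouched by the collapses of non-loop degenerate edges that make the splitting reduced; as $G_v$ is a proper subgroup of $G$, the splitting stays nontrivial throughout. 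So I may assume $(\mathcal{G},Y)$ is minimal, reduced and nontrivial, with $G_v$ still having no finite-index subgroup in $\mathcal{S}$; and, $Y$ being finite with every $G_e\in\mathcal{S}$, I fix $N<\infty$ such that $[G_e:K]\le N$ for every edge $e$ of $Y$ and every finite-index $K\le G_e$.

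Now fix $H\in N_G$ and let $X$ be the Bass--Serre tree. As in the proof of Theorem~\ref{FinVolOne}, $H$ acts on $X$ with $V(X/H)=\coprod_u H\backslash G/G_u$, $E_+(X/H)=\coprod_e H\backslash G/G_e$, all edge stabilisers in $\mathcal{S}$, and $H$ contains a hyperbolic element (a power of a hyperbolic element of $G$, which exists since $G$ is finitely presented and acts minimally without global fixed point). Collapsing the degenerate non-loop edges of $X/H$ produces a reduced splitting of $H$ over $\mathcal{S}$ whose complexity is $\ge C(X/H)$, so
$$C_{max}(H)\ \ge\ C(X/H)\ =\ r(X/H)+|V_{ndeg}(X/H)|\ =\ |E_+(X/H)|-|V_{deg}(X/H)|+1 .$$
The $H$-degenerate vertices of $X$ form a $G$-invariant set whose orbits $v_1,\dots,v_m$ do not include (a translate of) $v$, since $G_v\cap H=G_e\cap H$ would be a finite-index subgroup of $G_v$ lying in $\mathcal{S}$. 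Choosing witness edges $\psi(v_i)\in E_+(Y)$ with $G_{v_i}\cap H=G_{\psi(v_i)}\cap H$, the computation in the proof of Theorem~\ref{FinVolOne} (now with $\Phi=0$) yields
$$\frac{C_{max}(H)}{[G:H]}\ \ge\ \sum_{e\notin\textrm{Im}(\psi)}\frac{1}{[G_e:G_e\cap H]}\ +\ \sum_{e\in\textrm{Im}(\psi)}A_e\ +\ \frac{1}{[G:H]},$$
with every $A_e\ge 0$.

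The positivity is then forced by any edge $e_0$ incident to $v$ — one exists because $Y$ is connected and, being nontrivial, has an edge. If $e_0\notin\textrm{Im}(\psi)$ (in particular whenever $e_0$ is a loop, since then no degenerate vertex is an endpoint of $e_0$), the first sum is $\ge 1/[G_{e_0}:G_{e_0}\cap H]\ge 1/N$. Otherwise $e_0$ is a non-loop edge of $\textrm{Im}(\psi)$; as $v$ is non-degenerate, $\psi^{-1}(e_0)$ consists of the other endpoint $w$ of $e_0$ alone, and reducedness gives $G_{e_0}\subsetneq G_w$, hence $[G_w:G_{e_0}\cap H]\ge 2[G_{e_0}:G_{e_0}\cap H]$ and $A_{e_0}\ge\tfrac{1}{2}[G_{e_0}:G_{e_0}\cap H]^{-1}\ge 1/(2N)$. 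Either way $C_{max}(H)/[G:H]\ge 1/(2N)$ for all $H\in N_G$, so $V_{bi}(G)=\varlimsup_{H\in N_G}C_{max}(H)/[G:H]\ge 1/(2N)>0$. The one step that needs care is the normalisation in the first paragraph — checking that passing to the minimal subtree and to a reduced splitting keeps both the vertex $v$ and the nontriviality of the splitting — handled exactly as in the proof of Theorem~\ref{FinVolOne}; everything else is that proof verbatim, with the bounded-index hypothesis on $\mathcal{S}$ doing the job that $\varphi(G_v)>0$ did there.
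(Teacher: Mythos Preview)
Your proof is correct and follows exactly the approach the paper indicates: it says only that ``the same arguments as above (with $\varphi=0$) can be used,'' and you have carried this out in full, showing precisely how the bounded-index hypothesis on $\mathcal{S}$ substitutes for the positivity of $\varphi(G_v)$ via the edge $e_0$ incident to $v$ and the resulting uniform lower bound $1/(2N)$. The normalisation step and the case split on whether $e_0\in\textrm{Im}(\psi)$ are handled correctly.
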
 

The hypothesis that $G$ is finitely presented is only needed to
insure that $V_{bi}(G)<\infty$. In particular, when $\mathcal{S}$ is
the class of finite groups this gives another proof of \cite[Theorem
4.7]{Sy}: Let $G$ be a finitely presented group (or more generally
an accessible group) with infinitely many ends. Then
$V_{fin}(G)>0$.\end{rem}

We close this section by proving two results concerning a form of
strong Hopficity for finitely generated groups with positive volume.
Recall that a group $G$ is said to be Hopfian if every surjective
endomorphism of $G$ is necessarily an automorphism. A well-known
result of Malcev states that every finitely generated residually
finite group is Hopfian. A deep result of Sela \cite{Se-1} states
that  any  torsion-free hyperbolic group is Hopfian. Note that both
classes of groups (i.e. residually finite and hyperbolic) are closed
under subgroups of finite index.

The following proposition can be thought of as an algebraic analogue
of Theorem 2 in \cite{Rez} (for the case of infinitely many ends see
Proposition \ref{StabImInf}).

\firstImagStab
\begin{proof}
Since each subgroup $f^{k}(G)$ is of finite index in $G$, we have
$[G:f^{k}(G)]=\scalebox{1.2}{$\frac{V^{\varphi}_{n}(f^{k}(G))}{V^{\varphi}_{n}(G)}$}$.
By Remark \ref{VacylH1} the numerator in this expression is bounded
above by $C\cdot r(f^{k}(G))+D$, and therefore by $C\cdot r(G)+D$.
It follows that the set of indices $[G:f^{k}(G)],\,k\in \mathbb{N}$
is bounded. On the other hand, $G$ (being finitely generated) has
finitely many subgroups of bounded finite index, and the first part
of the result follows. In particular, there is $k$ such that the
restriction $f:f^{k}(G)\rightarrow f^{k}(G)$ is onto. Now, the
Hopficity of $f^{k}(G)$ implies that $f$ is an automorphism on
$f^{k}(G)$. Therefore the intersection $\textrm{ker}(f)\cap
f^{k}(G)$ is trivial and thus $\textrm{ker}(f)$ is finite. Since $G$
is torsion-free, we have that $\textrm{ker}(f)=1$ and $f$ is a
monomorphism.  Finally,
$[G:f(G)]=\scalebox{1.2}{$\frac{V^{\varphi}_{n}(f(G))}{V^{\varphi}_{n}(G)}$}=1$,
which shows that $f$ is onto, hence it is an automorphism.
\end{proof}

\begin{rem}
In the case of $0$-acylindrical splittings (i.e. free products),
each finite normal subgroup is trivial and therefore from the
argument in the above proof follows Theorem 2 of \cite{Hi} which can
be stated as follows: Let $G$ be a finitely generated group with
infinitely many ends (in order to have positive volume) which splits
as a free product and whose each subgroup of finite index is
Hopfian. If $f$ is an endomorphism of $G$ with image of finite
index, then $f$ is an automorphism.
\end{rem}

Our argument also leads to the following generalization of Hirshon's
result:

\firstStabImInf
\begin{proof}
We first note that the malnormality assumption implies that each
finite normal subgroup of $G$ is trivial. If $G$ is virtually
torsion-free, then there is a bound $C$ on the orders of its finite
subgroups. It follows, by Lemma \ref{VfinH1}, that $V_{fin}(H)\leq C
\cdot r(H)+1$ for each subgroup of $G$ of finite index. Furthermore,
since $G$ has infinitely many ends, we have $V_{fin}(G)>0$.

Suppose now that $G$ is not virtually torsion-free. Then in each
splitting of $G$ over finite subgroups, there is at least one
infinite vertex group. Thus, by Remark \ref{fpsmall}, there is a
positive integer $m$ such that $V_{fin,m}(G)>0$ (in this case
$\mathcal{S}=\mathcal{S}_{m}$ and $ V_{bi}(G)=V_{fin,m}(G)$). Again
by Lemma \ref{VfinH1}, we have $V_{fin,m}(H)\leq m \cdot r(H)+1$ for
each subgroup of $G$ of finite index. Thus, in either case, the
argument in the proof of Theorem \ref{ImagStab} applies.
\end{proof}

\section{Cyclic splittings of one-ended hyperbolic
groups}\label{SecHyp}

We apply the results of the previous section to cyclic splittings
(i.e. each edge group is infinite cyclic) of one-ended, torsion-free
hyperbolic groups using weighted complexity with
$\varphi_{0}=V_{fin}$ for finitely presented, torsion-free vertex
groups (see Lemma \ref{VfinH1} and Definition \ref{AcyclVol}). We
write $V_{1}^{hyp}$ to denote the corresponding volume. Since such a
splitting is acylindrical, Theorem \ref{FinVolOne} immediately
implies:

\firsthyppos

\begin{exam}\label{surfacetype} Let $(\mathcal{G},Y)$ be a cyclic splitting of a
one-ended, torsion-free hyperbolic group such that each vertex group
$G_{v_{i}}$ is a non-abelian free group of rank $n_{i}$. Suppose
that $E_{+}(Y)=\{e_{1},\ldots,e_{k}\}$ and that
$V(Y)=\{v_{1},\ldots,v_{m}\}$. We denote the edge group $G_{e_{i}}$
by $H_{i}$ and the vertex group $G_{v_{i}}$ by $G_{i}$. Let $X$ be
the universal tree and $H$ a normal subgroup of $G$ of finite index.
Then each vertex of $X$ is non-degenerate under the action of $H$
and we have \[\begin{array}{ccl}
C^{\varphi}(X/H) & \geq & r(X/H)+ \sum_{v\in V_{ndeg}(X/H)}\big(1+V_{fin}(H_{v})\big)\\
           & = & |E_{+}(X/H)|+1+\sum_{v\in X/H}V_{fin}(H_{v}) \\
           & = & \sum_{i=1}^{k} |H\backslash G/H_{i}|+ \sum_{i=1}^{m} |H\backslash G/G_{i}|\cdot V_{fin}(H\cap G_{i})+1\\
           & = & \sum_{i=1}^{k}\scalebox{1.1}{$\frac{[G:H]}{[G_{i}:H\cap G_{i}]}$}+
              \sum_{i=1}^{m}\scalebox{1.1}{$\frac{[G:H]}{[G_{i}:H\cap G_{i}]}$}V_{fin}(H\cap G_{i})+1\\
           & = & \sum_{i=1}^{k}\scalebox{1.1}{$\frac{[G:H]}{[G_{i}:H\cap
           G_{i}]}$}+\sum_{i=1}^{m}[G:H]\cdot (r(G_{i})-1)+1,
\end{array}\]
where the last equality follows from the formula $V_{fin}(H\cap
G_{i})=[G_{i}:H\cap G_{i}]\cdot V_{fin}(G_{i})$ and the fact that
$V_{fin}(F)=r(F)-1$, if $F$ is a free group. Thus
$$\frac{C^{\varphi}(X/H)}{[G:H]}\geq \sum_{i=1}^{m} (r(G_{i})-1)+\sum_{i=1}^{k}\frac{1}{[H_{i}:H\cap
           H_{i}]}+\frac{1}{[G:H]},$$ and hence $V_{1}^{hyp}(G)\geq \sum_{i=1}^{m} (r(G_{i})-1).$

In particular, if $G$ is the fundamental group of a closed
orientable surface $S_{g}$ of genus $g\geq 2$, then the pair of
pants decomposition of $S_{g}$ gives a cyclic splitting for $G$ with
$2g-2$ vertices and $3g-3$ edges such that each vertex group is free
of rank two. Therefore $V_{1}^{hyp}(G)\geq 2(g-1)=-\chi(S_{g})$,
where $\chi(S_{g})$ denotes the (usual) Euler characteristic of
$S_{g}$.
\end{exam}

A particularly important class of cyclic splittings of one-ended
torsion-free hyperbolic groups are JSJ decompositions. There are two
approaches, almost equivalent, both appropriate for our purpose. The
first is due to Sela \cite{Se0} and the other due to Bowditch
\cite{Bow}. We record some of the properties of Bowditch's JSJ
decomposition which is completely canonical.

Let $G$ be a torsion-free, one-ended hyperbolic group. Then there is
a cyclic splitting of $G$, called the JSJ decomposition of $G$, such
that each vertex group is either a maximal infinite cyclic subgroup
of $G$, a maximal hanging Fuchsian subgroup, or a rigid vertex
group. These three types of vertices are mutually exclusive. A
\textsl{maximal hanging Fuchsian} (MHF) subgroup is a non-cyclic
group which is isomorphic to the fundamental group of a compact
surface with boundary and therefore a non-abelian free group. Edge
groups incident to a (MHF) vertex group are precisely the peripheral
subgroups associated to the boundary components. Rigid vertex groups
do not admit splittings over cyclic subgroups relative to the
incident edge groups.

\begin{rem}
Note that although the JSJ decomposition of the fundamental group of
a closed orientable surface $S_{g}$ of genus $g\geq 2$ is trivial,
by the example above we have that its volume is positive.
\end{rem}

Sela \cite{Se0} (see also \cite{Lev}) used the JSJ decomposition of
a one-ended hyperbolic group $G$ to obtain a description of its
outer automorphism group $\textrm{Out}(G)$, from which one can
deduce the existence of (MHF) vertices in the case where
$\textrm{Out}(G)$ is not virtually  finitely generated free abelian.

Since a (MHF) subgroup is a non-abelian free group, by Proposition
\ref{hyppos} we obtain:

\firstJSJpos

\section{One-ended limit groups}\label{SecLim}

A group $G$ is \textsl{fully residually free} (or
$\omega$-residually free) if, for each finite subset $X$ of $G$,
there exists a homomorphism from $G$ to a free group $F$ which is
injective on $X$. A \textsl{limit group} is a finitely generated
fully residually free group. Fully residually free groups have been
extensively studied by Kharlampovich and Myasnikov \cite{KM1,KM2},
and by Remeslennikov \cite{Re} (under the name $\exists$-free
groups).

The term ``limit group" was introduced by Sela \cite{Sel2} and
reflects the fact that they are obtained from limits of sequences of
homomorphisms of an arbitrary finitely generated group to a free
group.

Examples of limit groups include finitely generated free groups,
finitely generated free abelian groups and fundamental groups of
closed hyperbolic surfaces with Euler characteristic less than -1.

Here we use the characterization of limit groups as finitely
generated subgroups of $\omega$-residually free tower groups
\cite{Sel3,KM2} (see also \cite{AB} for a proof).

\begin{defi}[\cite{Sel2}]
An $\omega$-rft space of height $0$ is the wedge of finitely many
circles, $n$-dimensional tori $\mathbb{T}^{n}$ ($n\geq 2$) and
closed hyperbolic surfaces of Euler characteristic less than -1.

An $\omega$-rft space $X_{h}$ of height $h$ is obtained from an
$\omega$-rft space $X_{h-1}$ of height $h-1$ by attaching a block of
one of the two following types:
\begin{description}
\item[1. Abelian block.] $X_{h}$ is the quotient space $X_{h-1}\sqcup
\mathbb{T}^{n}/\sim$, where a coordinate circle in $\mathbb{T}^{n}$
is identified with a nontrivial loop in $X_{h-1}$ that generates a
maximal abelian subgroup in $\pi_{1}(X_{h-1})$ (in fact infinite
cyclic).
\item[2. Quadratic block.] $X_{h}=X_{h-1}\sqcup
\Sigma/\sim$, where $\Sigma$ is a connected compact hyperbolic
surface of Euler characteristic at most -2 or a punctured torus
whose each boundary component is identified with a homotopically
non-trivial loop in $X_{h-1}$. It is also required that there exists
a retraction $r:X_{h}\rightarrow X_{h-1}$ with
$r_{\ast}(\pi_{1}\Sigma)$ non-abelian.
\end{description}
\end{defi}
Applying the Seifert-van Kampen theorem to the decomposition defined
by the final block, we see that the fundamental group of an
$\omega$-rft space $X_{h}$ of height $h\geq 1$ admits a cyclic
splitting as a 2-vertex graph of groups where one of the vertex
groups is $\pi_{1}(X_{h-1})$ and the other free or free abelian of
finite rank at least 2. Moreover, edge groups are maximal infinite
cyclic subgroups in the second type vertex groups. The malnormality
of peripheral subgroups in $\Sigma$ as well as the malnormality of
maximal abelian subgroups in limit groups imply that the above
splitting is $2$-acylindrical (see \cite[Lemma 1.4]{BH2}).

The \textsl{height} of a limit group $G$ is the minimal number $h$
for which there is an $\omega$-rft space $X_{h}$ of height $h$ such
that $\pi_{1}(X_{h})$ contains a subgroup isomorphic to $G$.

We consider cyclic $2$-acylindrical splittings of one-ended limit
groups using weighted complexity with $\varphi_{0}=V_{fin}$ for
finitely generated vertex groups with infinitely many ends. Let us
denote by $V_{1}^{lim}$ the associated volume.

Let $G$ be a one-ended limit group which is isomorphic to the
fundamental group of an $\omega$-rft space $X_{h}$ such that the
final block added is quadratic. Then the 2-acylindrical splitting of
$G$, defined as above by the final block, contains a free
non-abelian group and thus, by Theorem \ref{FinVolOne}, we have
$V_{1}^{lim}(G)>0$. The following shows that the same is true for
any one-ended finitely generated subgroup of height $h$ of $G$.

\firstlimpos

\begin{proof} In this case it follows as in the proof of \cite[Lemma
1.4]{BH1}, that $H$ admits a two-acylindrical splitting as before
which has a non-abelian free group and hence, by Theorem
\ref{FinVolOne}, $V_{1}^{lim}(H)>0$.
\end{proof}

In \cite{Sel4} (see also \cite{KM3}), Sela proved that a finitely
generated group has the same elementary theory as a non-abelian free
group if and only if it is the fundamental group of an $\omega$-rft
space $X_{h}$, whose construction involves only quadratic blocks.
Such groups are called \textsl{elementarily free}.

\begin{cor}\label{Elem}
If $G$ is a one-ended elementarily free group, then
$V_{1}^{lim}(G)>0$.
\end{cor}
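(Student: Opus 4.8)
The plan is to split into two cases according to whether or not $G$ is isomorphic to the fundamental group of a closed hyperbolic surface, handling the first reduction by Proposition \ref{limpos} and the second by Theorem \ref{FinVolOne}. Throughout I use the characterisation recalled just above: $G$ is elementarily free precisely when $G\cong\pi_{1}(X_{h})$ for some $\omega$-rft space $X_{h}$ all of whose blocks are quadratic.

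Suppose first that $G$ is \emph{not} isomorphic to a closed hyperbolic surface group, and fix a realisation $G\cong\pi_{1}(X_{h})$ with only quadratic blocks. I claim $h\geq 1$. Indeed, if $h=0$ then $X_{0}$ is a wedge of finitely many circles, tori $\mathbb{T}^{n}$ $(n\geq 2)$ and closed hyperbolic surfaces of Euler characteristic less than $-1$: a wedge of two or more of these pieces would present $G$ as a nontrivial free product of infinite groups, hence a group with infinitely many ends, contradicting one-endedness; a single circle would give $G\cong\mathbb{Z}$, which has two ends; a single torus would give $G\cong\mathbb{Z}^{n}$, which is abelian and so not elementarily free; and a single closed hyperbolic surface would contradict our case assumption. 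So $h\geq 1$, the final block of $X_{h}$ is quadratic, and $\pi_{1}(X_{h})=G$ contains a copy of $G$ itself; Proposition \ref{limpos} applied with $H=G$ then gives $V_{1}^{lim}(G)>0$. This case requires nothing beyond that proposition.

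Now suppose $G\cong\pi_{1}(S)$ for a closed hyperbolic surface $S$; necessarily $\chi(S)\leq -2$, since the fundamental group of the closed non-orientable surface of genus $3$ (the only closed hyperbolic surface with $\chi=-1$) is not elementarily free. I would exhibit an explicit cyclic, $2$-acylindrical splitting of $G$ with a non-abelian free vertex group and apply Theorem \ref{FinVolOne} with $n=1$ and $\varphi_{0}=V_{fin}$, using $V_{fin}(F_{r})=r-1>0$ for $r\geq 2$. Concretely, choose a two-sided non-separating simple closed curve $c$ on $S$ (such a curve exists on any closed surface of negative Euler characteristic); cutting $S$ along $c$ yields a connected compact surface $S'$ with two boundary circles and $\chi(S')=\chi(S)\leq -2$, so $\pi_{1}(S')$ is free of rank $1-\chi(S)\geq 3$, and $G$ is the associated HNN extension $\pi_{1}(S')\ast_{\mathbb{Z}}$, a cyclic splitting with vertex group $\pi_{1}(S')$. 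Because the boundary subgroups of a compact surface with boundary of negative Euler characteristic form a malnormal family of maximal cyclic subgroups, any two distinct edges of the Bass--Serre tree incident to a common vertex have stabilisers with trivial intersection; hence every path of length at least $2$ has trivial stabiliser, so in particular the splitting is $2$-acylindrical. Theorem \ref{FinVolOne} then yields $V_{1}^{lim}(G)>0$. When $S$ is orientable one may instead use a pants decomposition as in Example \ref{surfacetype}, which additionally gives the estimate $V_{1}^{lim}(G)\geq -\chi(S)$.

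The case where $G$ is not a surface group is an immediate appeal to Proposition \ref{limpos}; the only point requiring genuine care is the surface case, and within it the verification that the auxiliary splitting along a non-separating curve is $2$-acylindrical (equivalently, that the incident edge stabilisers come from a malnormal family). I do not anticipate any essential obstacle beyond this routine bookkeeping.
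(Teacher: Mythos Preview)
Your argument is correct and in fact more careful than the paper's, which records no proof at all: the corollary is meant to follow immediately from the sentence preceding Proposition~\ref{limpos} (``if $G=\pi_1(X_h)$ with quadratic final block then $V_1^{lim}(G)>0$'') together with Sela's characterisation of elementarily free groups. Two small remarks.

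First, in your non-surface case you invoke Proposition~\ref{limpos}, but that proposition is phrased with $h$ equal to the \emph{height of $H$ as a limit group}, whereas the $h$ you fixed is just the height of one particular tower realising $G$; these need not coincide. The cleaner reference is the paragraph immediately before Proposition~\ref{limpos}, which treats exactly your situation $G=\pi_1(X_h)$ with quadratic final block and appeals directly to Theorem~\ref{FinVolOne}; no height-matching is required there.

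Second, your separate treatment of the closed-surface case is a genuine addition: the paper's one-line derivation tacitly assumes $h\geq 1$, which is not automatic since a closed hyperbolic surface is itself a height-$0$ $\omega$-rft space. One can also close this gap within the paper's framework by observing that any such surface group can be re-realised as $\pi_1(X_1)$ with a quadratic block (e.g.\ split along a separating curve and retract onto one side), but your direct construction via a two-sided non-separating curve and Theorem~\ref{FinVolOne} is equally valid and arguably more transparent. The malnormality of the boundary family in $\pi_1(S')$ that you use is indeed standard.
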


\begin{rem} As before, we consider cyclic $2$-acylindrical splittings of one-ended limit
groups of height $h$. If we could show that the height of a limit
group is equal to the height of any finite index subgroup, then we
could define a volume $V_{1}^{l,h}$ inductively as follows:

If $G$ is a one-ended limit group of height $0$, then $G$ is either
a free abelian group of finite rank and thus $V_{1}^{l,0}(G)=0$, or
a hyperbolic surface group and we define
$V_{1}^{l,0}(G)=V_{1}^{hyp}(G)>0$.

If $G$ is a one-ended limit group of height $h>0$, then we use for
any cyclic $2$-acylindrical splitting $(\mathcal{G,Y})$ weighted
complexity defined by $\varphi(G_{v})=V_{fin}(G_{v})>0$, if $G_{v}$
is a vertex group with infinitely many ends, and
$\varphi(G_{v})=V_{1}^{l,k}(G_{v})$, if $G_{v}$ is a one-ended limit
group of height $k\leq h-1$. We denote by $V_{1}^{l,h}$ the
associated volume.

Note that, by Lemma \ref{VfinH1} and Remark \ref{VacylH1},
hypothesis \ref{H} is satisfied in each step. This means that
$V_{1}^{l,h}(G)<\infty$.

By \cite[Lemma 1.4]{BH1}, each one-ended limit group $G$ of height
$h$ admits a cyclic (acylindrical) splitting such that at least one
vertex group is a non-abelian limit group of height $\leq h-1$. Thus
it would follow (inductively) from Theorem \ref{FinVolOne} that
$V_{1}^{l,h}(G)>0$ for any non-abelian, one-ended limit group.
\end{rem}
It is worth noting that if $H$ is a finite index subgroup of a
one-ended, limit group $G$ of height $1$, then the height of $H$ is
also $1$. Indeed, this follows immediately by a well-known theorem
of Eckmann and M\"{u}ller which states that each virtual surface
group is a surface group. Therefore, the above discussion shows the
following:
\begin{cor}\label{Height1} Let $G$ be a one-ended, non-abelian limit group of
height $1$. Then $V_{1}^{l,1}(G)>0$.
\end{cor}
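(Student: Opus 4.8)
The plan is to produce, via \cite[Lemma~1.4]{BH1}, a cyclic $2$-acylindrical splitting of $G$ carrying a vertex group on which the weight $\varphi$ used to define $V_1^{l,1}$ is strictly positive, and then to invoke Theorem~\ref{FinVolOne}. Applying \cite[Lemma~1.4]{BH1} to the one-ended limit group $G$ of height $1$ yields a cyclic splitting $(\mathcal{G},Y)$ of $G$ — automatically $2$-acylindrical, since maximal abelian subgroups of limit groups are malnormal — with a vertex group $G_v$ that is a non-abelian limit group of height $0$. As every edge group of a cyclic splitting is infinite cyclic, $(\mathcal{G},Y)$ is a splitting over free abelian groups of rank at most $n=1$; hence, once we know $\varphi(G_v)>0$, Theorem~\ref{FinVolOne} will give $V_1^{l,1}(G)=V_1^{\varphi}(G)\ge\varphi(G_v)>0$.

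The key step is therefore to check $\varphi(G_v)>0$, and this is where the structure of height-$0$ limit groups is used. By the base case of the recursive definition recalled in the preceding remark, a one-ended limit group of height $0$ is either finitely generated free abelian or the fundamental group of a closed hyperbolic surface. If $G_v$ is one-ended it is not abelian (by construction), hence it is a closed hyperbolic surface group and $\varphi(G_v)=V_1^{l,0}(G_v)=V_1^{hyp}(G_v)>0$ by Example~\ref{surfacetype}. If $G_v$ is not one-ended then, being torsion-free and non-abelian, it has infinitely many ends; since limit groups are finitely presented, $\varphi(G_v)=V_{fin}(G_v)>0$ by \cite[Theorem~4.7]{Sy} (see Remark~\ref{fpsmall}). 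In both cases $\varphi(G_v)>0$, as needed.

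It remains to record that $V_1^{l,1}$ is a genuine finite volume, so that the conclusion of Theorem~\ref{FinVolOne} is meaningful; this is precisely the point that forces the hypothesis of the corollary. The recursive definition of $V_1^{l,1}$ is legitimate only if the height of a finite-index subgroup of a one-ended limit group of height $1$ is again $1$, which is the Eckmann--M\"uller theorem that every virtual surface group is a surface group; finiteness of $V_1^{l,1}$ then follows from hypothesis~(H), valid at this level by Lemma~\ref{VfinH1} (limit groups are torsion-free, so $V_{fin}(\Gamma)\le r(\Gamma)+1$), Remark~\ref{VacylH1}, Theorem~\ref{ThmGru}, and acylindrical accessibility (Theorem~\ref{AcylAccess}). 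Granting this, Theorem~\ref{FinVolOne} finishes the proof. I expect the only real, and quite mild, obstacle to be checking that the splitting supplied by \cite[Lemma~1.4]{BH1} lies in the class over which $V_1^{l,1}$ is defined and that $\varphi$ is correctly assigned to $G_v$ according to its number of ends; everything else is a direct appeal to Theorem~\ref{FinVolOne} together with the bookkeeping already in place, with Eckmann--M\"uller serving only to justify the definition of $V_1^{l,1}$, not the positivity itself.
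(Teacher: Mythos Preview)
Your proposal is correct and follows essentially the same route as the paper: the corollary is presented there as an immediate consequence of the preceding remark, which combines \cite[Lemma~1.4]{BH1} with Theorem~\ref{FinVolOne} once Eckmann--M\"uller guarantees that finite-index subgroups of a height-$1$ limit group again have height~$1$, so that $V_1^{l,1}$ is well-defined. Your case analysis on the number of ends of $G_v$ (surface group versus infinitely many ends) is exactly the content of the base case $V_1^{l,0}$ and of the clause $\varphi=V_{fin}$ in the recursive definition, so nothing is added or missing.
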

\section{Acylindrical splittings of CSA groups over abelian
subgroups}\label{SecCSA}

In this section, we generalize the preceding results to the case of
one-ended CSA groups. Recall that a group $G$ is \textsl{CSA} (or
conjugately separated abelian) if the maximal abelian subgroups of
$G$ are malnormal. If $\Gamma$ is a torsion-free group which is
hyperbolic relative to a (finite) family of finitely generated free
abelian subgroups, then each $\Gamma$-limit group (or equivalently,
by Theorem \cite[Theorem 5.2]{Gro2}, each finitely generated and
fully residually $\Gamma$) is CSA (see \cite[Lemma 6.9]{Gro1}). We
refer to \cite{Gro1,Gro2} for the basic definitions and properties
concerning $\Gamma$-limit groups.

In particular, limit groups, torsion-free hyperbolic groups and more
generally finitely generated subgroups of torsion-free groups
hyperbolic relative to a collection of free abelian subgroups, are
CSA.

Let $G$ be a finitely generated group. In \cite{GL1}, Guirardel and
Levitt gave a method for constructing an acylindrical $G$-tree
$T_{c}$, called the \textsl{tree of cylinders}, from any $G$-tree
$T$ with edge stabilizers in a class $\mathcal{A}$ closed under
conjugation. Using trees of cylinders they obtain
$\textrm{Out}(G)$-invariant splittings which in many cases give a
lot of information about the structure of $\textrm{Out}(G)$ (in the
same way as in the case of torsion-free hyperbolic groups). The
vertex stabilizers of $T_{c}$ are divided into two types:
\textsl{rigid} and \textsl{flexible}. A vertex stabilizer $G_{v}$ is
rigid if, whenever $G_{v}$ acts on a tree with edge stabilizers in
$\mathcal{A}$, then it stabilizes a vertex, and flexible otherwise
(see \cite{GL2} for more details and for a description of flexible
vertex stabilizers when $\mathcal{A}$ consists of slender groups).

If we restrict to the case of splittings of one-ended CSA groups
over abelian groups, we have the following result.
\begin{thm}(\cite[Prop. 6.3]{GL1}, \cite[Theorem 9.5]{GL2})
Let $G$ be a finitely generated, one-ended and torsion-free CSA
group, let $T$ be a $G$-tree with finitely generated abelian edge
stabilizers, and let $T_{c}$ be the associated tree of cylinders.
\begin{enumerate}
\item Edge stabilizers of $T_{c}$ are non-trivial and abelian.
\item $T$ and $T_{c}$ have the same non-abelian vertex stabilizers;
non-abelian flexible vertex stabilizers are fundamental groups of
compact surfaces with boundary.
\item $T_{c}$ is $2$-acylindrical.
\end{enumerate}
\end{thm}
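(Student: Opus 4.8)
This theorem is the combination of \cite[Prop.~6.3]{GL1} and \cite[Theorem~9.5]{GL2}, and the plan is to read it off from the CSA hypothesis together with Guirardel--Levitt's construction of the tree of cylinders. Since $G$ is one-ended it does not split over a finite subgroup, so after passing to the minimal subtree I may assume every edge stabilizer of $T$ is infinite, hence nontrivial; this lets me work with the class $\mathcal{A}$ of nontrivial finitely generated abelian subgroups of $G$ and the admissible equivalence relation $A\sim B$ iff $\langle A,B\rangle$ is abelian. The first step, which makes everything else formal, is the elementary remark that in a CSA group every nontrivial $g$ lies in a \emph{unique} maximal abelian subgroup, namely its centralizer $C_{G}(g)$: if $g$ lies in a maximal abelian $M$ and $a\in C_{G}(g)$, then $1\neq g\in M\cap aMa^{-1}$, so malnormality of $M$ gives $a\in M$; thus $C_{G}(g)\subseteq M$, and $C_{G}(g)\supseteq M$ because $M$ is abelian, whence $C_{G}(g)=M$. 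Consequently each edge $e$ of $T$ carries a well-defined maximal abelian subgroup $M_{e}=C_{G}(G_{e})$, two edges are $\sim$-equivalent exactly when their $M_{e}$ agree, the cylinders of $T$ are the subtrees $Y_{M}$ spanned by $\{e:1\neq G_{e}\subseteq M\}$, and $\textrm{Stab}(Y_{M})=M$ --- since $gY_{M}=Y_{M}$ forces $g\in N_{G}(M)$, and then $M\cap gMg^{-1}=M\neq 1$ gives $g\in M$ by malnormality.

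Granting from \cite{GL1} the construction and basic properties of $T_{c}$, the three assertions follow. \emph{(1)} An edge of $T_{c}$ joins a vertex $v$ (one lying in at least two cylinders) to a cylinder $Y_{M}$ containing $v$, and its stabilizer is $G_{v}\cap M$, which is abelian as a subgroup of $M$ and nontrivial since it contains the (infinite) stabilizer of any edge of $Y_{M}$ incident to $v$. \emph{(2)} A non-abelian vertex group $G_{v}$ of $T$ lies in no maximal abelian subgroup, so (using $\textrm{Stab}(Y_{M})=M$) the edges at $v$ cannot all belong to one cylinder; hence $v$ persists in $T_{c}$ as a vertex with stabilizer $G_{v}$ --- this is the statement that $T$ and $T_{c}$ have the same non-abelian vertex stabilizers, \cite[Prop.~6.3]{GL1} --- while the identification of the non-abelian flexible ones as fundamental groups of compact surfaces with boundary is \cite[Theorem~9.5]{GL2}, applied with $\mathcal{A}$ a class of slender (here abelian) groups: a flexible stabilizer is either in $\mathcal{A}$, hence abelian, or quadratically hanging, which for a torsion-free non-abelian group means $\pi_{1}$ of a compact surface with boundary. \emph{(3)} For the acylindricity, fix $1\neq g\in G$ and consider $\textrm{Fix}_{T_{c}}(g)$. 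It cannot contain two distinct cylinder vertices $Y_{M_{1}},Y_{M_{2}}$, for then $g\in\textrm{Stab}(Y_{M_{1}})\cap\textrm{Stab}(Y_{M_{2}})=M_{1}\cap M_{2}$, and uniqueness forces $M_{1}=C_{G}(g)=M_{2}$, hence $Y_{M_{1}}=Y_{M_{2}}$. So $\textrm{Fix}_{T_{c}}(g)$, being a subtree of the bipartite tree $T_{c}$ with at most one cylinder vertex, is contained in the star of a single vertex and has diameter at most $2$; thus the stabilizer of every segment of length at least $3$ is trivial, and in particular $T_{c}$ is $2$-acylindrical.

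The only genuinely hard ingredient is the surface description in \emph{(2)}: that the non-abelian flexible vertex stabilizers are fundamental groups of compact surfaces with boundary is exactly \cite[Theorem~9.5]{GL2} and rests on the Rips-machine analysis of stable actions on $\mathbb{R}$-trees, so I would quote it rather than reprove it. All the rest is a formal consequence of the uniqueness of maximal abelian subgroups in a CSA group together with the definition of the tree of cylinders from \cite{GL1}; these are the two references I would rely on.
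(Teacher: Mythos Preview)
The paper does not prove this theorem at all: it is quoted verbatim as a black box from \cite[Prop.~6.3]{GL1} and \cite[Theorem~9.5]{GL2}, and the text moves on immediately to use it. Your proposal therefore goes well beyond what the paper does, supplying a correct sketch of why the result holds --- the key observation that in a CSA group every nontrivial element has a unique maximal abelian subgroup (its centralizer), from which the description of cylinders, the nontriviality and abelianness of edge stabilizers in $T_c$, and the $2$-acylindricity all follow formally, while the surface description of flexible vertices is rightly deferred to \cite{GL2}. There is nothing to compare on the paper's side; your sketch is sound and is essentially the argument one finds in \cite{GL1} specialised to the CSA/abelian setting.
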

Let $G$ be a finitely generated, torsion-free, one-ended CSA group
such that abelian subgroups of $G$ are finitely generated of bounded
rank. For each acylindrical splitting of such a group over abelian
subgroups of bounded rank $n$, we use weighted complexity with
$\varphi_{0}=V_{fin}$ (see Definition \ref{AcyclVol}) and we denote
the associated volume by $V_{n}^{CSA}$.

\firstCSA
\begin{proof}
Let $n=n(G)$ be a positive integer that bounds the ranks of the
abelian subgroups of $G$. In particular, $n$ bounds the ranks of the
abelian subgroups of each subgroup $H$ of $G$. The splitting
$(\mathcal{G},T_{c}/G)$ of $G$ corresponding to the action on
$T_{c}$ is acylindrical, $\varphi_{0}(G_{v})=r(G_{v})-1>0$ ($G_{v}$
being non-abelian free), so Theorem \ref{FinVolOne} applies.
\end{proof}

Let $\Gamma$ be a torsion-free group which is hyperbolic relative to
a collection $\{A_{1},\ldots,A_{k}\}$ of finitely generated free
abelian subgroups. Each non-cyclic abelian subgroup of $\Gamma$ is
contained in a conjugate of some $A_{i}$ (see for example the proof
of \cite[Lemma 2.2]{Gro2}) and therefore abelian subgroups of
$\Gamma$ are finitely generated of bounded rank. It is also worth
noting that each non-abelian, one-ended, strict (i.e. it is not a
f.g. subgroup of $\Gamma$) $\Gamma$-limit group $G$ which is not a
surface group, admits a non-trivial abelian splitting \cite[Prop.
2.11]{Gro2}.

\begin{lem} Let $\Gamma$ be as above. If $G$ is a $\Gamma$-limit
group, then abelian subgroups of $G$ are finitely generated of
bounded rank.
\end{lem}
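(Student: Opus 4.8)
The plan is to reduce the statement to the known structure of abelian subgroups of a (toral) relatively hyperbolic group. The single substantive input is a structural theorem of Groves: since $\Gamma$ is torsion-free and hyperbolic relative to the finitely generated free abelian subgroups $A_{1},\dots,A_{k}$, every $\Gamma$-limit group $G$ is again torsion-free and hyperbolic relative to a finite family $\{P_{1},\dots,P_{l}\}$ of finitely generated free abelian subgroups (see \cite{Gro1,Gro2}). Torsion-freeness of $G$ also follows directly from full residual $\Gamma$-ness, since a homomorphism $G\to\Gamma$ injective on the cyclic subgroup generated by a would-be torsion element would embed a nontrivial finite cyclic group into the torsion-free group $\Gamma$. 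Granting the relative hyperbolicity, set $N=\max\{1,\mathrm{rk}(P_{1}),\dots,\mathrm{rk}(P_{l})\}$; I will show that every abelian subgroup of $G$ is finitely generated of rank at most $N$.

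Let $B\leq G$ be a nontrivial abelian subgroup. If $B$ contains an element $h$ that is loxodromic for the relatively hyperbolic structure, then $h$ lies in a maximal elementary subgroup $E(h)$, which is virtually cyclic, and its centralizer satisfies $C_{G}(h)\leq E(h)$; since every element of $B$ commutes with $h$ we get $B\leq C_{G}(h)\leq E(h)$, and torsion-freeness forces $E(h)\cong\mathbb{Z}$, so $B$ is infinite cyclic. Otherwise every nontrivial element of $B$ is parabolic. Choose $1\neq g\in B$; then $g$ fixes a unique point $\xi$ of the Bowditch boundary, and every $b\in B$, commuting with $g$, must fix $\xi$ as well. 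Hence $B$ is contained in $\mathrm{Stab}_{G}(\xi)$, which is a conjugate of some $P_{i}$, so $B$ is a subgroup of a finitely generated free abelian group of rank at most $N$. In all cases $B$ is finitely generated of rank at most $N$, proving the lemma.

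There are two harmless preliminaries worth recording. First, one may split off the free part: since $G$ is torsion-free and finitely generated, its Grushko decomposition writes $G$ as a free product of a free group and finitely many freely indecomposable factors, each again a $\Gamma$-limit group (residual finiteness properties pass to subgroups); by the Kurosh subgroup theorem an abelian subgroup of a free product is infinite cyclic or conjugate into a factor, so it suffices to treat freely indecomposable (hence one-ended or cyclic) $G$. Second, if one prefers to avoid quoting the full relative-hyperbolicity statement, one can argue by induction along the hierarchy of $\Gamma$-limit groups: by \cite[Prop.\ 2.11]{Gro2} every non-abelian, one-ended, strict $\Gamma$-limit group that is not a surface group splits nontrivially over an abelian subgroup into $\Gamma$-limit groups lower in Groves' Makanin--Razborov hierarchy, and an abelian subgroup of an amalgam or HNN extension over a finitely generated abelian group $C$ is, by Bass--Serre theory, either conjugate into a factor or an extension of a subgroup of a conjugate of $C$ by $\mathbb{Z}$, hence finitely generated of rank at most $\mathrm{rk}(C)+1$; one then bounds the rank inductively.

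The main obstacle is really the one nontrivial ingredient: that passing from $\Gamma$ to a $\Gamma$-limit group preserves relative hyperbolicity with finitely generated free abelian peripheral subgroups (equivalently, that Groves' hierarchy is well-founded). The point where the hypothesis on $\Gamma$ enters — beyond plain relative hyperbolicity — is that every non-cyclic abelian subgroup of $\Gamma$ lies in a conjugate of some $A_{i}$; this is what guarantees that the abelian subgroups produced at each stage of the construction are finitely generated of uniformly bounded rank, rather than merely abelian. Everything downstream is a routine application of the ``loxodromic or parabolic'' dichotomy for subgroups of a torsion-free relatively hyperbolic group together with commutativity, so the only thing I would want to verify carefully is the precise citation making the peripheral subgroups of $G$ finitely generated free abelian.
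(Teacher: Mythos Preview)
Your main argument is correct in outline but takes a different route from the paper, and the difference is worth noting. You quote as a black box that a $\Gamma$-limit group $G$ is itself toral relatively hyperbolic (with finitely generated free abelian peripherals), and then run the standard loxodromic/parabolic dichotomy. The paper instead works directly with Groves' Makanin--Razborov hierarchy: it invokes \cite[Prop.\ 5.11 and Cor.\ 5.12]{Gro2} to obtain a finite chain $G\to L_{2}\to\cdots\to L_{s}$ in which each $L_{i}$ is a free product whose freely indecomposable factors are either subgroups of $L_{i+1}$ or fundamental groups of finite graphs of groups with vertex groups subgroups of $L_{i+1}$ and abelian edge groups, while $L_{s}$ is a free product of a free group and finitely many f.g.\ subgroups of $\Gamma$. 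A non-cyclic abelian subgroup is then traced down the chain: at each level Kurosh puts it in a factor, and if that factor is a graph of groups one uses the action on the Bass--Serre tree (axis of a hyperbolic element, translation map to $\mathbb{Z}$) to bound its rank by the rank of an abelian subgroup one level down, plus~$1$. This yields the explicit bound $r(A_{i_{0}})+s-1$.

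The concern with your main route is exactly the one you flag: the statement that $\Gamma$-limit groups are toral relatively hyperbolic is not, as far as I can see, proved in \cite{Gro1,Gro2}; it is true, but its proof (via Dahmani's combination theorem applied inductively along the very hierarchy the paper is using) already presupposes control over abelian subgroups at each stage, so quoting it here is close to circular, or at least a much heavier hammer than needed. Your alternative sketch in the final paragraph is essentially the paper's argument, but note that \cite[Prop.\ 2.11]{Gro2} alone only gives a single splitting, not a well-founded hierarchy; you need the termination statement from \cite[Prop.\ 5.11]{Gro2} (the shortening-quotient chain) to make the induction go through and to get a bound depending only on $G$.
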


\begin{proof} Abelian subgroups of $G$ are finitely generated by
\cite[Cor. 5.12]{Gro2}. Following the proof (and the terminology) of
\cite[Prop. 5.11]{Gro2}, there is a sequence of shortening quotients
of $G$:
\[G\rightarrow L_{2}\rightarrow \cdots \rightarrow L_{s}\]
such that $L_{s}$ is a free product of a finitely generated free
group and a finite collection of finitely generated subgroups of
$\Gamma$. Moreover, $L_{i}$ is a free product of a finitely
generated free group and freely indecomposable $\Gamma$-limit groups
such that each free factor of $L_{i}$ is either (i) a finitely
generated free group; (ii) a finitely generated subgroup of
$L_{i+1}$; or (iii) can be represented as the fundamental group of a
finite graph of groups in which the vertex groups are finitely
generated subgroups of $L_{i+1}$ and finitely generated free groups,
and the edge groups are finitely generated abelian subgroups.

If $A$ is a non-cyclic abelian subgroup of $L_{s}$, then $A$ is
contained in a free factor and therefore in $\Gamma$. It follows
that its rank is bounded by the maximal rank of the parabolic
subgroups, say $r(A_{i_{0}})$. If now $A$ is a non-cyclic abelian
subgroup of $L_{i}$, then $A$ is contained in a free factor and thus
is either a subgroup of $L_{i+1}$ or acts non-trivially on a tree
with vertex groups finitely generated subgroups of $L_{i+1}$ and
edge groups finitely generated abelian. In the latter case $A$ acts
on the associated tree and the axis of any hyperbolic element of $A$
is an $A$-invariant line. Thus we have an homomorphism from $A$ to
$\mathbb{Z}$ whose kernel is an abelian subgroup of $L_{i+1}$. It
follows that, in each case, the rank of $A$ is bounded above by the
rank of the kernel plus 1. Finally, the rank of an abelian subgroup
of $G$ is bounded by $r(A_{i_{0}})+s-1$.
\end{proof}
By combining the preceding lemma with Proposition \ref{CSA} we
obtain:

\firstCSAlimit

\begin{cor}
Let $\Gamma$ and $G$ be as in the preceding proposition. Then each
endomorphism of $G$ with image of finite index is an automorphism.
\end{cor}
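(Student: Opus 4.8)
The plan is to read the corollary off Theorem~\ref{ImagStab}, once the volume of $G$ is known to be positive and the Hopficity hypotheses are verified. First I would invoke Proposition~\ref{CSAlimit}: under the stated assumptions there is a positive integer $n=n(G)$ with $V_{n}^{CSA}(G)>0$. Recalling that $V_{n}^{CSA}$ is by definition the volume $V_{n}^{\varphi}$ of Definition~\ref{AcyclVol} obtained by taking $\varphi_{0}=V_{fin}$ on the (torsion-free, infinitely-ended) vertex groups, this is precisely the running hypothesis $V_{n}^{\varphi}(G)>0$ of Theorem~\ref{ImagStab}, and $G$ is one-ended and finitely generated. So it remains to check the two extra conditions needed for the ``automorphism'' conclusion of that theorem: that $G$ is torsion-free and that every subgroup of finite index in $G$ is Hopfian.

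Torsion-freeness is immediate: $\Gamma$ is torsion-free by hypothesis, and a $\Gamma$-limit group is fully residually $\Gamma$, so any single nontrivial element survives in $\Gamma$ under some homomorphism and hence has infinite order; thus $G$, and every subgroup of it, is torsion-free. For Hopficity I would use two facts about the class of $\Gamma$-limit groups: (i) it is closed under passage to finitely generated subgroups (a finitely generated subgroup of a fully residually $\Gamma$ group is again fully residually $\Gamma$), and (ii) $\Gamma$-limit groups are Hopfian, by \cite{Gro1,Gro2}. Since $G$ is finitely generated, every finite-index subgroup $H\le G$ is finitely generated, hence a $\Gamma$-limit group by (i), hence Hopfian by (ii). Therefore all finite-index subgroups of $G$ are Hopfian, and Theorem~\ref{ImagStab} then yields that any endomorphism of $G$ with image of finite index is an automorphism.

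The part carrying the real content is the Hopficity input (ii), which rests on the nontrivial structure theory of $\Gamma$-limit groups developed in \cite{Gro1,Gro2}; the permanence property (i) is the only thing one must additionally pin down, to be sure the hypothesis of Theorem~\ref{ImagStab} is met for \emph{all} finite-index subgroups of $G$ and not merely for $G$ itself. Everything else is a direct citation of the results already in hand.
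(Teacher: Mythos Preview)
Your proposal is correct and follows essentially the same route as the paper: invoke Proposition~\ref{CSAlimit} to get $V_{n}^{CSA}(G)>0$, then apply Theorem~\ref{ImagStab} after checking that $G$ is torsion-free and that every finite-index subgroup is Hopfian. The paper handles the Hopficity step in one stroke by citing \cite[Cor.~5.5]{Gro2} (which already gives Hopficity for all finitely generated subgroups of a $\Gamma$-limit group), whereas you spell it out via closure under finitely generated subgroups plus Hopficity of $\Gamma$-limit groups; your explicit verification of torsion-freeness is a detail the paper leaves implicit.
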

\begin{proof}
By \cite[Cor. 5.5]{Gro2}, each finitely generated subgroup of a
$\Gamma$-limit group is Hopfian, and Theorem \ref{ImagStab} applies.
\end{proof}

It is proved in \cite[Theorem 3.1]{BGHM} that each endomorphism of
$\Gamma$ whose image is of finite index in $\Gamma$, is a
monomorphism.

\begin{cor} Let $G$ be a one-ended group which is either (i) a hyperbolic group with
$\textrm{Out}(G)$ not virtually finitely generated free abelian;
(ii) an elementarily free group; or (iii) a non-abelian limit group
of height $1$. Then every endomorphism of $G$ with image of finite
index is an automorphism.
\end{cor}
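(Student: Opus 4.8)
The plan is to reduce each of the three cases to an application of Theorem~\ref{ImagStab}, which already gives the conclusion once we know two things about $G$: first, that $V^{\varphi}_{n}(G)>0$ for an appropriate acylindrical volume $V^{\varphi}_{n}$; and second, that $G$ is torsion-free with every finite-index subgroup Hopfian. The second hypothesis is automatic in all three cases: limit groups, elementarily free groups, and torsion-free hyperbolic groups are torsion-free, and each of these classes is closed under passage to finite-index subgroups, so it suffices to recall that hyperbolic groups are Hopfian by Sela's theorem (cited above) and that finitely generated subgroups of limit groups are Hopfian (being themselves limit groups, hence residually finite, hence Hopfian by Malcev). So the real content is positivity of the volume in each case, and this has essentially been established already.

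Concretely, I would argue case by case. In case (i), $G$ is a torsion-free one-ended hyperbolic group with $\mathrm{Out}(G)$ not virtually finitely generated free abelian; by Corollary~\ref{JSJpos} this forces the JSJ decomposition of $G$ to contain a (MHF) vertex group, and the corollary gives $V_{1}^{hyp}(G)>0$. Since $V_{1}^{hyp}$ is exactly the volume $V^{\varphi}_{n}$ associated (with $n=1$) to cyclic, hence $2$-acylindrical, splittings and $\varphi_0=V_{fin}$, Theorem~\ref{ImagStab} applies and every endomorphism of $G$ with image of finite index is an automorphism. In case (ii), $G$ is a one-ended elementarily free group; by Corollary~\ref{Elem} we have $V_{1}^{lim}(G)>0$, and again $V_{1}^{lim}=V^{\varphi}_{1}$ for cyclic $2$-acylindrical splittings, so Theorem~\ref{ImagStab} gives the conclusion. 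In case (iii), $G$ is a non-abelian one-ended limit group of height $1$; by Corollary~\ref{Height1} we have $V_{1}^{l,1}(G)>0$, which is the relevant volume for $2$-acylindrical cyclic splittings, and the same argument applies.

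The only point requiring a little care, and the one I expect to be the main (though minor) obstacle, is checking that in each case the hypothesis ``every subgroup of finite index is Hopfian'' of Theorem~\ref{ImagStab} genuinely holds \emph{for finite-index subgroups}, not just for $G$ itself. For hyperbolic groups this is immediate since a finite-index subgroup of a torsion-free hyperbolic group is again torsion-free hyperbolic, hence Hopfian. For limit groups and elementarily free groups one uses that a finite-index subgroup of a limit group is a limit group (it is finitely generated and fully residually free), and limit groups are residually finite, hence Hopfian by Malcev; alternatively one can invoke Sela's theorem that limit groups are Hopfian directly. In all three situations $G$ is moreover torsion-free, so Theorem~\ref{ImagStab} yields that the monomorphism $f$ obtained from stabilization of the image chain is in fact onto, i.e.\ an automorphism. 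Thus the corollary follows immediately by combining Theorem~\ref{ImagStab} with Corollaries~\ref{JSJpos}, \ref{Elem} and \ref{Height1}.
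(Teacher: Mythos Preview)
Your proof is correct and follows essentially the same approach as the paper: in each of the three cases you invoke the relevant positivity result (Corollary~\ref{JSJpos}, Corollary~\ref{Elem}, Corollary~\ref{Height1}) and then apply Theorem~\ref{ImagStab}. The paper's own proof is a one-line reference to those three corollaries, leaving the torsion-free and Hopfian verifications implicit since they are already noted parenthetically in the statement of Theorem~\ref{ImagStab}.
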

\begin{proof}
It follows from Corollaries \ref{JSJpos}, \ref{Elem} and
\ref{Height1}, that in each case the (corresponding) volume of $G$
is positive.
\end{proof}

\section{Acknowledgements}

The author thanks the referee for the
careful reading of the manuscript and useful suggestions.

\noindent
Department of Mathematics\\
National and Kapodistrian University of Athens\\
Panepistimioupolis, GR-157 84, Athens, Greece\\
{\it e-mail}: msykiot@math.uoa.gr

\end{document}